\documentclass[11pt,reqno]{amsart}
\usepackage[T1]{fontenc}
\usepackage[utf8]{inputenc}
\usepackage{lmodern}
\usepackage[a4paper,margin=29mm,headheight=14pt]{geometry}
\usepackage{amsmath,amssymb,amsfonts,amsthm,mathtools}
\usepackage{aliascnt}
\usepackage{microtype}
\usepackage[hidelinks,unicode,pdfusetitle]{hyperref}
\usepackage[nameinlink,noabbrev]{cleveref}
\numberwithin{equation}{section}
\hypersetup{pdftitle={Ample vector bundles without Griffiths-semipositive metrics},pdfsubject={Griffiths positivity and ample vector bundles},pdfkeywords={Griffiths conjecture, ample vector bundle, abelian surface, formal adjoint, stable deformations}}
\allowdisplaybreaks[2]
\theoremstyle{plain}
\newtheorem{theorem}{Theorem}[section]
\theoremstyle{plain}
\newaliascnt{proposition}{theorem}
\newtheorem{proposition}[proposition]{Proposition}
\aliascntresetthe{proposition}
\crefname{proposition}{Proposition}{Propositions}
\Crefname{proposition}{Proposition}{Propositions}
\newaliascnt{lemma}{theorem}
\newtheorem{lemma}[lemma]{Lemma}
\aliascntresetthe{lemma}
\crefname{lemma}{Lemma}{Lemmas}
\Crefname{lemma}{Lemma}{Lemmas}
\newaliascnt{corollary}{theorem}
\newtheorem{corollary}[corollary]{Corollary}
\aliascntresetthe{corollary}
\crefname{corollary}{Corollary}{Corollaries}
\Crefname{corollary}{Corollary}{Corollaries}
\theoremstyle{definition}
\newaliascnt{definition}{theorem}

\aliascntresetthe{definition}
\crefname{definition}{Definition}{Definitions}
\Crefname{definition}{Definition}{Definitions}
\theoremstyle{definition}
\newaliascnt{question}{theorem}
\newtheorem{question}[question]{Question}
\aliascntresetthe{question}
\crefname{question}{Question}{Questions}
\Crefname{question}{Question}{Questions}
\theoremstyle{definition}
\newaliascnt{remark}{theorem}
\newtheorem{remark}[remark]{Remark}
\aliascntresetthe{remark}
\crefname{remark}{Remark}{Remarks}
\Crefname{remark}{Remark}{Remarks}
\crefname{theorem}{Theorem}{Theorems}
\crefname{section}{Section}{Sections}

\newcommand{\C}{\mathbb C}
\newcommand{\R}{\mathbb R}
\newcommand{\Z}{\mathbb Z}
\newcommand{\PP}{\mathbb P}
\newcommand{\OO}{\mathcal O}
\newcommand{\HH}{\mathcal H}
\newcommand{\DD}{\mathcal D}
\newcommand{\End}{\operatorname{End}}
\newcommand{\Herm}{\operatorname{Herm}}
\newcommand{\tr}{\operatorname{tr}}
\newcommand{\rk}{\operatorname{rk}}
\newcommand{\Id}{\operatorname{Id}}
\newcommand{\Pic}{\operatorname{Pic}}
\newcommand{\Hom}{\operatorname{Hom}}
\newcommand{\Levi}{\operatorname{Levi}}
\newcommand{\Gr}{\operatorname{Gr}}
\newcommand{\Hilb}{\operatorname{Hilb}}
\newcommand{\norm}[1]{\left\lVert #1\right\rVert}
\newcommand{\ip}[2]{\left\langle #1,#2\right\rangle}
\newcommand{\half}{\tfrac12}
\newcommand{\Hmat}{\mathsf H}
\title[Ample bundles without Griffiths-semipositive metrics]{Ample vector bundles without\\ Griffiths-semipositive metrics}
\author{Yun-Heng Du}
\address{Academy of Mathematics and Systems Science, Chinese Academy of Sciences, Beijing 100190, China}
\email{duyunheng@amss.ac.cn}
\author{Song-Yan Xie}
\address{State Key Laboratory of Mathematical Sciences, Academy of Mathematics and Systems Science, Chinese Academy of Sciences, Beijing 100190, China; School of Mathematical Sciences, University of Chinese Academy of Sciences, Beijing 100049, China}
\email{xiesongyan@amss.ac.cn}
\hypersetup{pdfauthor={Yun-Heng Du; Song-Yan Xie}}
\date{}
\subjclass[2020]{32L05, 14D20, 14K05, 53C55}
\keywords{Griffiths positivity, ample vector bundle, Levi form, abelian surface, stable sheaf, extensions of line bundles}
\begin{document}
\begin{abstract}
We construct a rank-two ample holomorphic vector bundle on an abelian surface that admits no smooth Griffiths-semipositive Hermitian metric, thereby giving a counterexample to the Griffiths conjecture. The proof combines a curvature obstruction that persists under deformations with the openness of ampleness in an irreducible moduli space of stable sheaves.
\end{abstract}
\maketitle

\section{Introduction}\label{sec:introduction}

Let $E\to X$ be a holomorphic vector bundle on a compact complex manifold.  A smooth Hermitian metric $g$ is \emph{Griffiths positive} if its Chern curvature satisfies
\[
 g\bigl(R^g(u,\bar u)e,e\bigr)>0
 \qquad(0\ne u\in T_x^{1,0}X,\quad 0\ne e\in E_x)
\]
at every $x\in X$.  The metric is \emph{Griffiths semipositive} if the same curvature expression is nonnegative for all $x,u,e$.  Here $T_x^{1,0}X$ is the holomorphic tangent space, and the curvature convention is specified in \cref{eq:chern}.  We use the line convention for projectivisation: $\PP(E)$ parametrises lines in the fibres of $E$.  The bundle $E$ is \emph{ample} if the tautological line bundle $\OO_{\PP(E^*)}(1)$ is ample \cite{Hartshorne}, where $E^*$ is the dual bundle.

Griffiths \cite{Griffiths1969} proved that Griffiths positivity implies ampleness and conjectured the converse.  For line bundles, this equivalence is the celebrated Kodaira embedding theorem \cite{Kodaira1954}.  On curves the conjecture is true: Umemura \cite{Umemura} established the equivalence for vector bundles, and Campana--Flenner \cite[p.~571]{CampanaFlenner} gave a geometric characterisation of ample bundles.  More recently, Murakami \cite{Murakami} gave an analytic proof using nonlinear differential equations.  Related approaches to the metric-existence problem use direct-image metrics \cite{Berndtsson,MourouganeTakayama,Naumann} and nonlinear systems \cite{Demailly2021,Pingali2021,Pingali2023,Mandal}.

In higher rank, a positive metric on the tautological line bundle need not arise from a Hermitian quadratic form on each fibre of $E$.  Finsler geometry makes this distinction explicit \cite{KobayashiFinsler}.  A recent result rules out a universal fibrewise functorial construction of the desired Hermitian metric from an arbitrary prescribed positive tautological metric \cite[Theorem~2.1]{Lempert2026}.  Here functoriality means equivariance under complex-linear isomorphisms of the fibres.  This result does not rule out a construction using global geometry.

We show that the conjecture fails already in rank two on an abelian surface.  Here $X=C\times C$, where $C$ is the elliptic curve with affine equation $y^2=x^3-x$, completed by its point at infinity.

\begin{theorem}\label{thm:main}
There exists an ample holomorphic vector bundle $E$ of rank two on $X$ which admits no smooth Griffiths-semipositive Hermitian metric.  In particular, $E$ admits no smooth Griffiths-positive Hermitian metric.
\end{theorem}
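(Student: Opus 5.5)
The plan has two parts, following the strategy announced in the abstract. The first is a curvature obstruction that is stable under deformation. The second is a deformation argument that produces ampleness.

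\emph{Curvature obstruction.} Start from a non-split extension
\[
0\to L\to E_0\to M\to 0
\]
of line bundles on $X=C\times C$. Choose $L$ and $M$ so that some curve $D\subset X$, for instance an elliptic curve such as a fibre, a diagonal, or the graph of the complex multiplication $[i]$ of $C$, satisfies $\deg(M|_D)\le 0$. Alternatively, choose them so that there is a curve class $\gamma$ with $c_1(E)\cdot\gamma$ small compared with what Griffiths semipositivity forces. The key analytic fact is that Griffiths semipositivity passes to quotients: if $E\to Q$ is a quotient and $g$ is Griffiths semipositive, then the quotient metric on $Q$ is semipositive, by the second fundamental form inequality. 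The same inequality gives an \emph{integral} identity. Let $E|_D\to Q$ be a quotient with $\deg Q=0$ on an elliptic curve $D$. Then
\[
0=\deg Q=\int_D c_1(Q,g_Q)=\int_D\bigl(\text{semipositive term}\bigr)+\int_D|\beta|^2,
\]
so the second fundamental form $\beta$ vanishes along $D$. Hence the sequence splits holomorphically along $D$, and the curvature vanishes in the direction of $Q$. I would then propagate this rigidity over a family of such curves $D$ sweeping out $X$; the translates of $D$ are natural. Combined with the flatness of $Q$ along each $D$, this should force $E$ to be an extension that splits, or should force some characteristic-class equality, for example a Bogomolov-type equality $c_1^2=4c_2$ for the metric. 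That equality is excluded by the chosen numerics.

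The point is to phrase the obstruction purely numerically and cohomologically, in terms of $c_1(E)$ and $c_2(E)$ together with the existence of a degree-$0$ quotient on every translate of some elliptic curve. Since it has this form, it persists for every bundle $E_t$ in a deformation in which such quotients survive. The step I expect to be hardest is here: the obstruction must be a closed condition on the moduli space and cannot require $E_t$ itself to be an extension. I would try to express it through the semicontinuity of $h^0(D_y,E_t^*|_{D_y})$ along the translates $D_y$.

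\emph{Ampleness via deformation.} The extension $E_0$ itself will not be ample, because it has a degree-$0$ quotient on $D$. So I would pass to the moduli space $\mathcal M$ of stable sheaves on the abelian surface $X$ with the given Mukai vector. By Mukai and Yoshioka, $\mathcal M$ is irreducible; it is a product of $\Pic^0$ and a Kummer-type hyperkähler manifold. Ampleness is an open condition in flat families of vector bundles. So it suffices to exhibit one ample locally free point of $\mathcal M$, for instance a pushforward or a Lazarsfeld--Mukai type bundle, or a general extension with positive restrictions to all curves. This point lies in the dense open ample locus.

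Finally, I need a point in that locus which still carries the obstruction. I would show that the obstruction locus $Z\subset\mathcal M$, cut out by the closed condition of having the degree-$0$ quotients on the translates of $D$, is nonempty and meets the ample open set. The cleanest way would be a dimension count: $Z$ is large, while the non-ample locus is a proper closed subset not containing it. I would verify this by Fourier--Mukai or by an explicit computation on $C\times C$, using the extra automorphism of $C$ to produce enough curves. Any $E\in Z$ that is ample then proves \cref{thm:main}. The last sentence of the theorem follows because positive implies semipositive.
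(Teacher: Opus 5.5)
There is a genuine gap at the core of your plan: the obstruction you construct is incompatible with ampleness, so the final intersection step fails for every choice of data. If $E$ is ample, then $E|_D$ is ample for every curve $D\subset X$, and so is every quotient of $E|_D$. Hence every quotient line bundle of $E|_D$ has positive degree. Every point of your locus $Z$, the bundles with a degree-$0$ quotient on the translates of $D$, is therefore non-ample, exactly as you note for $E_0$ itself. So $Z\subset\mathcal M\setminus\mathcal M^+$, and no dimension count or Fourier--Mukai computation can make $Z$ meet the ample locus. More generally, on curves ampleness is equivalent to Griffiths positivity, so no test that sees only restrictions $E|_D$ can obstruct an ample bundle.

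A purely numerical obstruction is excluded too. The Mukai vector is constant on the connected moduli space, and the paper's ample point $E_S=f^*S$ there carries a Griffiths-semipositive quotient metric from $\OO_X^7$. Your rigidity step also fails as stated. Take $m=c_1(p_1^*\OO_C([0]))$. Then $H\oplus p_1^*\OO_C([0])$ carries a Griffiths-semipositive sum of constant-curvature metrics and has degree-$0$ quotients on every fibre $\{y\}\times C$, yet $c_1^2-4c_2=(h-m)^2=30\ne0$. Finally, density runs the other way. A nonempty Zariski-open ample locus in an irreducible $\mathcal M$ is guaranteed to meet every nonempty \emph{analytically open} set, not a closed one.

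What is missing is an obstruction that is an \emph{open} condition on the holomorphic structure and invisible on curves. The paper builds one on the split bundle $F_0=L_1^{-1}\oplus L_2^{-1}$, whose constant curvatures $C_s$ are indefinite although both $L_s$ have positive $H$-degree. Evaluating the Levi form of $g_{h'}$ on horizontal lifts for a \emph{fixed} background Chern connection makes the test linear in the candidate metric $h'$. Pairing with a separable tensor $Q_0=\sum_sQ_s\otimes P_s$ weighted toward negative curvature directions, with $\tr(C_sQ_s)=-5/4$, and integrating by parts gives $I_{J_0}(h')=-\tfrac{5\pi}{4}\int_X\tr(k^{-1}h')\,dV_H$ (\cref{prop:split-adjoint}). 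This strict negativity survives $C^2$-small changes of the holomorphic structure (\cref{cor:adjoint-open}). Nonsplit extensions with small extension class, stable for a nearby polarisation, then give a point $[E_1]$ with an analytic neighbourhood of obstructed bundles, and that neighbourhood meets the dense ample locus. Your moduli-theoretic half (Mukai--Yoshioka irreducibility, openness of ampleness, one ample point) agrees with the paper. It closes the argument only once the obstruction is an open condition.
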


The flexibility of the Oka principle \cite{Forstneric2017} led us to expect that counterexamples might exist.  Since the conjecture holds on curves, surfaces are a natural first case.  On an abelian surface explicit calculations are possible: the universal cover is $\C^2$, the transition maps between lifted coordinates are translations, and constant positive-definite Hermitian forms descend to flat K\"ahler metrics.  We begin with a direct sum of two line bundles $L_1$ and $L_2$, whose curvature can be written explicitly, and then deform its holomorphic structure.

The main new ingredient is a strict curvature obstruction to Griffiths semipositivity that persists under small deformations of the holomorphic structure.  The split bundle $E_0=L_1\oplus L_2$ is constructed with a strict Levi obstruction.  On its dual, evaluating the squared-norm Levi form on horizontal lifts defined by a fixed background connection makes the expression linear in the candidate metric.  A positive weighted sum of tests on pairs of tangent and fibre directions, followed by integration by parts, yields a negative integral for every positive-definite candidate metric.  This contradicts the nonnegativity required by Griffiths semipositivity on $E_0$.  The strict coefficient inequality persists under perturbation of the holomorphic structure.

This local obstruction must be connected with ampleness.  For that we need a stable deformation of the split bundle and an ample bundle in the same irreducible moduli space.  A nonsplit extension of $L_2$ by $L_1$ becomes stable after an arbitrarily small change of polarisation in the appropriate direction.  Multiplying its extension class by a small nonzero scalar gives holomorphic structures arbitrarily close to the split one.  We thus obtain a stable bundle $E_1$ that retains the obstruction.  Local families of stable bundles then give an analytically open neighbourhood $U_{\mathrm{an}}$ of $[E_1]$ in which no represented bundle admits a smooth Griffiths-semipositive Hermitian metric.

For the ample locus, we construct a second stable bundle $E_S$ with the same Chern classes by pulling back an ample stable Steiner bundle on $\PP^2$ along a suitable finite morphism $f:X\to\PP^2$.  The square lattice of $C\simeq\C/(\Z+i\Z)$, preserved by multiplication by $i$, lets us choose the split model to match these Chern classes while retaining the curvature directions needed for the obstruction.

For a common general polarisation, the moduli theorems of Mukai \cite[Theorem~0.1]{Mukai1984} and Yoshioka \cite[Theorem~0.1]{Yoshioka} place $[E_1]$ and $[E_S]$ in a smooth connected, hence irreducible, moduli space.  This irreducibility connects the analytic persistence of the curvature obstruction with the Zariski openness of ampleness in algebraic families \cite[Theorem~1.2.17]{Lazarsfeld}.  The ample point $[E_S]$ makes the Zariski-open ample locus nonempty, hence analytically dense by irreducibility.  Its intersection with the analytic obstruction neighbourhood $U_{\mathrm{an}}$ supplies the required bundle.

Openness of ampleness in families is also used in the proof  \cite{Xie2018} of Debarre's conjecture \cite{Debarre2005}, which asserts that a general complete intersection of $c\ge N/2$ hypersurfaces of sufficiently large degrees in $\PP^N$ has ample cotangent bundle; see also \cite{Brotbek2016,BrotbekDarondeau2018}.  This naturally leads to the question of Griffiths positivity for such cotangent bundles, raised in Section~\ref{sec:cotangent}.

\medskip
\noindent\textbf{Plan of the paper.} In Section~\ref{sec:levi}, we establish the strict Levi obstruction and construct the split bundle $E_0$.  The ample stable comparison bundle $E_S$ is constructed in Section~\ref{sec:comparison}.  Section~\ref{sec:bridge} completes the proof by constructing stable extensions near $E_0$ and placing one of them and $E_S$ in a common irreducible moduli space.  We end with a question in Section~\ref{sec:cotangent}.

\section{The Levi obstruction near a split bundle}\label{sec:levi}

\subsection{The Levi form on horizontal lifts}

Let $F\to X$ be a holomorphic vector bundle of rank $r\ge1$.  The obstruction is formulated for Griffiths-seminegative metrics on $F$; the application to the original problem takes $F=E^*$.  For a smooth Hermitian metric $k$ on $F$, we take inner products to be linear in the first argument and conjugate-linear in the second.  Its squared-norm function on the total space is
\[
 g_k:F\longrightarrow\R,\qquad g_k(z,v)=k_z(v,v).
\]
Choose a local holomorphic frame $(e_1,\ldots,e_r)$ and represent fibre vectors by columns.  The Gram matrix is $G_k=(k(e_a,e_b))$.  For calculations we use its conjugate, denoted by the same letter as the metric:
\[
 k=\overline{G_k}=G_k^t,\qquad
 k_z(v,w)=v^tG_k(z)\bar w=w^{\dagger}k(z)v,\qquad
 g_k(z,v)=v^{\dagger}k(z)v.
\]
Here $B^t$ denotes the transpose of a matrix $B$, and $B^\dagger=\bar B^t$ its conjugate transpose.  The metric adjoint $B^{*k}$ of a bundle endomorphism $B$ is characterised by $k(Bv,w)=k(v,B^{*k}w)$; its matrix is $k^{-1}B^\dagger k$, which reduces to $B^\dagger$ in a $k$-unitary frame.  The same convention for Gram matrices applies to any other Hermitian metric or form.

Write $\partial_i=\partial/\partial z_i$ and $\partial_{\bar j}=\partial/\partial\bar z_j$ in local holomorphic coordinates $z_1,\ldots,z_n$ on $X$.  The Chern connection is the unique connection preserving $k$ whose $(0,1)$-part defines the given holomorphic structure: in a holomorphic frame this part is coefficientwise $\bar\partial$.  For a section represented by a column $s$, it is written $\DD_k s=ds+\Gamma s$, where $\Gamma=\sum_i\Gamma_i\,dz_i$.  Its connection and curvature coefficients are
\begin{equation}\label{eq:chern}
 \Gamma_i=k^{-1}\partial_i k,\qquad
 R_{i\bar j}=-\partial_{\bar j}\Gamma_i.
\end{equation}
The curvature is the endomorphism-valued two-form $R=\DD_k^2$, given by
\[
 R=\bar\partial\Gamma
  =\sum_{i,j}R_{i\bar j}\,dz_i\wedge d\bar z_j.
\]
The minus sign in \eqref{eq:chern} comes from reordering $d\bar z_j\wedge dz_i$.  For a tangent vector $u=\sum_i u_i\partial_i$, the contraction $R(u,\bar u)=\sum_{i,j}u_i\bar u_jR_{i\bar j}$ is an endomorphism of the fibre.  The metric $k$ is Griffiths seminegative when
\[
 k\bigl(R(u,\bar u)v,v\bigr)=v^{\dagger}kR(u,\bar u)v\le0
\]
at every point, for all tangent vectors $u$ and fibre vectors $v$.  Griffiths negativity means strict inequality whenever both vectors are nonzero.  The dual bundle $F^*$ has fibre $\Hom_\C(F_z,\C)$.  Its induced metric $k^*$ has matrix $(k^{-1})^t$ in the dual frame, and its connection and curvature matrices satisfy
\[
 \Gamma^{k^*}=-\Gamma^t,\qquad
 R^{k^*}=-R^t.
\]
Thus a metric on $F$ is Griffiths semipositive if and only if its dual metric on $F^*$ is Griffiths seminegative.

For a smooth real-valued function $g$ in local holomorphic coordinates $(w_1,\ldots,w_N)$ and a tangent vector $\zeta=\sum_a\zeta_a\partial/\partial w_a$, the Levi form is
\[
 \Levi(g)(\zeta)
 =\sum_{a,b=1}^N\frac{\partial^2g}{\partial w_a\,\partial\bar w_b}
       \zeta_a\bar\zeta_b.
\]
A smooth function $g$ is plurisubharmonic precisely when $\Levi(g)(\zeta)\ge0$ for every $\zeta$; it is strictly plurisubharmonic when the inequality is strict for every nonzero $\zeta$.

For any smooth Hermitian metric $h'$ on $F$, write $g_{h'}(z,v)=h'_z(v,v)=v^{\dagger}h'(z)v$ in a local holomorphic frame.  Its Levi form is computed in both the base and fibre variables.  By Forstneri\v c--Kusakabe \cite[Proposition~2.6]{ForstnericKusakabe}, $h'$ is Griffiths seminegative if and only if this function $g_{h'}$ is plurisubharmonic on $F$. If $h'$ is Griffiths negative, then $g_{h'}$ is strictly plurisubharmonic away from the zero section by Drinovec Drnov\v sek--Forstneri\v c \cite[Proposition~6.2(iii)]{DrinovecForstneric2010}.

In a local holomorphic trivialisation, a $(1,0)$ tangent vector to $F$ at $(z,v)$ has the form $(u,w)\in T^{1,0}_{(z,v)}F$, where $u=\sum_i u_i\partial_i\in T_z^{1,0}X$ is the base component and $w\in F_z$ is the fibre-coordinate component.  Let $R^{h'}$ denote the Chern curvature of $h'$.  Subscripts denote directional derivatives, so $h'_u=\sum_i u_i\partial_i h'$ and $h'_{u\bar u}=\sum_{i,j}u_i\bar u_j\partial_i\partial_{\bar j}h'$, and $R^{h'}_{u\bar u}=\sum_{i,j}u_i\bar u_jR^{h'}_{i\bar j}$.  Direct differentiation of the squared norm gives
\[
 \Levi(g_{h'})(u,w)
 =w^{\dagger}h'w+2\operatorname{Re}(w^{\dagger}h'_u v)+v^{\dagger}h'_{u\bar u}v.
\]
Completing the square in $w$ (see \cite[\S1.1]{ZhangSchur}) and using \eqref{eq:chern} gives the Schur-complement identity
\begin{equation}\label{eq:levi-block}
 \Levi(g_{h'})(u,w)=\bigl(w+(h')^{-1}h'_u v\bigr)^\dagger h'\bigl(w+(h')^{-1}h'_u v\bigr)-v^{\dagger}h'R^{h'}_{u\bar u}v.
\end{equation}
The converse in the strict case follows from \eqref{eq:levi-block}: if $g_{h'}$ is strictly plurisubharmonic away from the zero section, taking $w=-(h')^{-1}h'_u v$ gives $-v^{\dagger}h'R^{h'}_{u\bar u}v>0$ for nonzero $u$ and $v$, so $h'$ is Griffiths negative.

We now keep the underlying smooth bundle $F$ fixed and allow its holomorphic structure to vary.  Write $\Omega^{p,q}(F)$ for smooth $F$-valued forms of type $(p,q)$.  By the Koszul--Malgrange theorem \cite[Proposition~1.3.7]{KobayashiDG}, a holomorphic structure $J$ on $F$ is equivalently specified by an integrable Dolbeault operator, that is, a $\C$-linear first-order differential operator satisfying
\[
 \bar\partial_J:\Omega^{0,0}(F)\longrightarrow\Omega^{0,1}(F),
 \qquad
 \bar\partial_J(fs)=\bar\partial f\otimes s+f\bar\partial_Js,
 \qquad \bar\partial_J^2=0.
\]
Here $f$ is a smooth function and $s$ a smooth section.  The operator extends to bundle-valued forms by the graded Leibniz rule, and $\bar\partial_J^2=0$ is the integrability condition.  The local holomorphic sections are those satisfying $\bar\partial_Js=0$.

Fix a smooth Hermitian metric $k$ on $F$, and let $\DD_J$ be the Chern connection of $(\bar\partial_J,k)$.  This connection stays fixed while the candidate metric $h'$ varies.  At an arbitrary point $(z,v)$ of the total space of $F$, with $v\in F_z$, use a local $J$-holomorphic trivialisation to write a $(1,0)$ tangent vector to $F$ as $(u,w)\in T^{1,0}_{(z,v)}F$, where $u\in T_z^{1,0}X$ and $w\in F_z$.  Write $\Gamma_u=\sum_i u_i\Gamma_i$.  The vector $(u,w)$ is horizontal when the covariant change of the fibre vector in this direction is zero, that is, $w+\Gamma_u v=0$.  With $(z,v)$ fixed, each base tangent vector $u$ therefore has the unique horizontal lift
\begin{equation}\label{eq:lift}
 u^{\mathrm h,J}_v=(u,-\Gamma_u v).
\end{equation}
For this definition, $h'$ may be any smooth Hermitian form, without a positivity assumption; we still set $g_{h'}(z,v)=h'_z(v,v)$.  Write $\Levi_J$ for the Levi form in the holomorphic total space defined by $J$.  The \emph{horizontal Levi form} of $h'$ is
\begin{equation}\label{eq:horizontal}
 \HH_J(h')(u,v):=\Levi_J(g_{h'})(u^{\mathrm h,J}_v).
\end{equation}
If $h'$ is a Griffiths-seminegative metric, plurisubharmonicity gives $\HH_J(h')(u,v)\ge0$ for all $u,v$.  The testing directions are independent of $h'$, so $\HH_J$ is a real-linear second-order differential operator in $h'$.  This operator gives a linear necessary condition on the unknown metric, which becomes an integral obstruction in \cref{sec:adjoint}.

Differentiating $g_{h'}(z,v)=v^{\dagger}h'(z)v$ and substituting $w=-\Gamma_u v$ gives
\begin{equation}\label{eq:Hdirect}
 \HH_J(h')(u,v)=v^{\dagger}\left(
 h'_{u\bar u}-\Gamma_u^{\dagger}h'_u-h'_{\bar u}\Gamma_u
                  +\Gamma_u^{\dagger}h'\Gamma_u\right)v.
\end{equation}
This expression is Hermitian quadratic in each of $u$ and $v$.

For the remainder of this section, fix a Kähler metric on $X$; in the application it is the flat metric used in \cref{sec:split}.  Let $\nabla^X$ be its Chern connection, which equals the Levi--Civita connection.

The fixed metric $k$ identifies the varying Hermitian form $h'$ with the unique $k$-self-adjoint endomorphism $A:F\to F$ satisfying
\[
 h'_z(v,w)=k_z(A_zv,w)\qquad(z\in X,\ v,w\in F_z).
\]
In the matrix convention above, $A=k^{-1}h'$.  Conversely, $kA$ denotes the Hermitian form $(v,w)\mapsto k(Av,w)$.

The induced connection $\nabla^J$ on the endomorphism bundle $\End(F)$ differentiates $A$ by the rule $(\nabla^J A)s=\DD_J(As)-A\DD_Js$.  To differentiate once more, we also use $\nabla^X$ on the tangent argument.  Thus the covariant Hessian is, for example,
\[
 ((\nabla^J)^2A)(U,\bar V)
 =\nabla^J_U\nabla^J_{\bar V}A
  -\nabla^J_{\nabla^X_U\bar V}A.
\]
For a $k$-self-adjoint $A$, let $\mathcal L_J(A)$ be the $\End(F)$-valued Hermitian form on $T^{1,0}X$ characterised by
\[
 k\bigl(\mathcal L_J(A)(u,\bar u)v,v\bigr)=\HH_J(kA)(u,v).
\]
Thus $\mathcal L_J$ is the real-linear differential operator obtained by writing $\HH_J$ in terms of endomorphisms rather than Hermitian forms.  At a point we use Kähler-normal holomorphic coordinates on $X$ and a local holomorphic frame of $F$ with $k=I$ and $\partial k=0$ at the point, so the relevant first-order connection coefficients vanish there.

\begin{proposition}\label{prop:covariant}
With the convention in \eqref{eq:chern}, the endomorphism-valued Hermitian form $\mathcal L_J(A)$ is
\begin{equation}\label{eq:Hcovariant}
 \mathcal L_J(A)_{i\bar j}
 =\half\left(((\nabla^J)^2A)_{i\bar j}
             +((\nabla^J)^2A)_{\bar j i}\right)
  -\half\{R_{i\bar j},A\},
\end{equation}
where $R$ is the curvature of $\DD_J$ and $\{R,A\}=RA+AR$ is the anticommutator.
\end{proposition}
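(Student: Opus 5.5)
The plan is to verify \eqref{eq:Hcovariant} pointwise, since both sides are tensorial. The left side is defined by the intrinsic quantity $\HH_J$. On the right, the covariant Hessian and the curvature are tensors. So it suffices to compare the two sides at an arbitrary point $z_0$ in the special coordinates fixed just before the proposition. These are Kähler-normal holomorphic coordinates on $X$ and a $J$-holomorphic frame of $F$ with $k=I$ and $\partial k=0$ at $z_0$; since $k$ is Hermitian, also $\bar\partial k=0$ there. At $z_0$ this gives $\Gamma_i=k^{-1}\partial_ik=0$ and vanishing Christoffel symbols of $\nabla^X$, while \eqref{eq:chern} gives $R_{i\bar j}=-\partial_i\partial_{\bar j}k$ at $z_0$.

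\textbf{Left side.} Set $h'=kA$ and substitute into \eqref{eq:Hdirect}. At $z_0$ the three terms containing $\Gamma_u$ vanish, so $\HH_J(kA)(u,v)=v^\dagger (kA)_{u\bar u}v$. Expanding by the Leibniz rule, all cross terms contain a first derivative of $k$ and hence vanish at $z_0$. What remains is
\[
(kA)_{i\bar j}=\partial_i\partial_{\bar j}A+(\partial_i\partial_{\bar j}k)A=A_{i\bar j}-R_{i\bar j}A .
\]
Since $k=I$ at $z_0$, we have $k(Lv,v)=v^\dagger Lv$. Thus $\HH_J(kA)(u,v)=v^\dagger\bigl(A_{u\bar u}-R_{u\bar u}A\bigr)v$.

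\textbf{Right side.} In a holomorphic frame the $(0,1)$-part of the connection is $\bar\partial$. Hence $\nabla^J_{\bar j}A=\partial_{\bar j}A$ and $\nabla^J_iA=\partial_iA+[\Gamma_i,A]$. Differentiating once more, and using that the $\nabla^X$ correction vanishes at $z_0$, gives
\[
\nabla^J_i\nabla^J_{\bar j}A=A_{i\bar j},\qquad
\nabla^J_{\bar j}\nabla^J_iA=A_{i\bar j}+[\partial_{\bar j}\Gamma_i,A]=A_{i\bar j}-[R_{i\bar j},A].
\]
The first identity holds because the commutator term carries $\Gamma_i(z_0)=0$. Averaging and subtracting $\half\{R_{i\bar j},A\}$ yields $A_{i\bar j}-\half[R,A]-\half\{R,A\}=A_{i\bar j}-R_{i\bar j}A$, which agrees with the expression found for the left side.

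\textbf{Main obstacle.} The main point needing care is that $\mathcal L_J(A)$ is characterised only through the quadratic form $v\mapsto k(\mathcal L_J(A)(u,\bar u)v,v)$. Such a form determines only the $k$-self-adjoint part of an endomorphism. The matching above is between non-self-adjoint matrices, so I must check that the right side of \eqref{eq:Hcovariant} is itself $k$-self-adjoint after contracting with $u_i\bar u_j$. For this I would use that $R(u,\bar u)$ is $k$-self-adjoint, which follows from $R_{i\bar j}^{*k}=R_{j\bar i}$ for the Chern connection, and that $A$ is self-adjoint. Then $\{R,A\}$ is self-adjoint and $[R,A]$ is skew-adjoint. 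The same identity shows that the symmetrised Hessian is self-adjoint. Consequently $v^\dagger(A_{u\bar u}-R_{u\bar u}A)v$ equals $v^\dagger(\text{RHS})v$, since the skew part $-\half[R,A]$ contributes an imaginary quantity that must vanish for the real form. Taking self-adjoint parts on both sides then gives the stated formula.
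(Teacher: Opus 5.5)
Your proof is correct and follows the paper's argument: at a point, in Kähler-normal coordinates with a normal holomorphic frame ($k=I$, $\partial k=0$), you use $k_{i\bar j}=-R_{i\bar j}$ to get $\HH_J(kA)(u,v)=v^\dagger(A_{u\bar u}-R_{u\bar u}A)v$, compute $\nabla_i\nabla_{\bar j}A=A_{i\bar j}$ and $\nabla_{\bar j}\nabla_iA=A_{i\bar j}-[R_{i\bar j},A]$, and conclude by tensoriality. Your final ``main obstacle'' paragraph is unnecessary, and its reasoning is off: you have already shown that the right side equals $A_{i\bar j}-R_{i\bar j}A$ exactly as a matrix, and this matrix equals $h'_{i\bar j}$ at the point, which is Hermitian; moreover, over $\C$ the form $v\mapsto k(Lv,v)$ determines $L$ completely by polarisation, so no skew part has to ``vanish''.
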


\begin{proof}
At the point under consideration, $k=I$, $\partial_i k=0$, and
\[
 k_{i\bar j}=-R_{i\bar j}.
\]
Differentiate $A=k^{-1}h'$ twice. The first derivatives of $k$ vanish, hence
\[
 A_{i\bar j}=h'_{i\bar j}+R_{i\bar j}A.
\]
Write $[R,A]=RA-AR$ for the commutator.  For the induced connection on endomorphisms and the Kähler-normal coordinates,
\[
 ((\nabla^J)^2A)_{i\bar j}=A_{i\bar j},\qquad
 ((\nabla^J)^2A)_{\bar j i}=A_{i\bar j}-[R_{i\bar j},A].
\]
The symmetrised expression on the right of \cref{eq:Hcovariant} therefore equals $h'_{i\bar j}$. This is precisely the horizontal Levi coefficient in the normal frame, by \cref{eq:Hdirect}. Both descriptions are tensorial, giving the claimed identity.
\end{proof}

For a line bundle with local metric $k=e^{-\phi}$ and Hermitian form $h'=ae^{-\phi}$, where $\phi,a$ are smooth real functions, formula \eqref{eq:Hdirect} becomes
\begin{equation}\label{eq:scalar}
 \Levi(h'|v|^2)(u,\phi_u v)
 =e^{-\phi}|v|^2\left(a_{u\bar u}-a\phi_{u\bar u}\right).
\end{equation}
All terms involving first derivatives of $\phi$ cancel.  Here $R=\partial\bar\partial\phi$, as in \cite[Ch.~V, (12.8)]{DemaillyCADG}, so the expression is $e^{-\phi}|v|^2(a_{u\bar u}-aR_{u\bar u})$.

\subsection{The adjoint obstruction and persistence}\label{sec:adjoint}

Every Griffiths-seminegative metric $h'$ satisfies $\HH_J(h')(u,v)\ge0$, so any nonnegative weighted sum of these values is also nonnegative.  A separable tensor records the weights and directions of such a sum.  The Hermitian metrics on $T^{1,0}X$ and $F$ remain fixed.  For a Hermitian vector space $W$, let $\Herm(W)$ denote the real vector space of self-adjoint endomorphisms of $W$.  The symbols $\tr$ and $\Id_F$ denote the trace and the identity endomorphism of $F$, respectively.  A tensor
\[
 Q_x\in \Herm(T_x^{1,0}X)\otimes_{\R}\Herm(F_x)
\]
acts as a Hermitian endomorphism of $T_x^{1,0}X\otimes F_x$.  It is called \emph{separable} if it belongs to the closed convex cone
\[
 \operatorname{Sep}_x
 =\overline{\operatorname{cone}}\{P_u\otimes P_v:u\in T_x^{1,0}X,\ v\in F_x\}.
\]
Here the endomorphisms $P_u$ and $P_v$ act by
\[
 P_u(w)=\langle w,u\rangle u\quad(w\in T_x^{1,0}X),\qquad
 P_v(e)=k(e,v)v\quad(e\in F_x).
\]
These positive semidefinite endomorphisms are represented by $uu^{\dagger}$ and $vv^{\dagger}$ in unitary frames.  For nonzero $u$, $P_u$ has rank one and is an orthogonal projection precisely when $\langle u,u\rangle=1$.  Similarly, for nonzero $v$, $P_v$ has rank one and is an orthogonal projection precisely when $k(v,v)=1$.  The notation $\operatorname{cone}$ means the set of finite nonnegative linear combinations.  The tensor used below is separable by an explicit decomposition into tensor products of positive semidefinite tangent endomorphisms and orthogonal line projections.

If $\mathcal B(u,v)$ is Hermitian quadratic in $u$ and $v$, its coefficients in unitary frames give a unique Hermitian matrix $B$ on $T_x^{1,0}X\otimes F_x$ satisfying
\[
 \mathcal B(u,v)=(u\otimes v)^{\dagger}B(u\otimes v).
\]
The trace pairing
\[
 \ip{Q}{\mathcal B}=\tr(QB)
\]
is independent of the unitary frames and satisfies
\[
 \ip{P_u\otimes P_v}{\mathcal B}
 =\tr\bigl((P_u\otimes P_v)B\bigr)=\mathcal B(u,v).
\]
It therefore gives a pairing independent of the decomposition of $Q$.  By linearity and continuity, $\ip{Q}{\mathcal B}\ge0$ for every separable $Q$ whenever $\mathcal B(u,v)\ge0$ for all $u,v$.

Assume that $X$ is compact without boundary.  Fix a smooth separable Hermitian tensor $Q$ and a positive smooth volume form $dV$ on $X$.  For a holomorphic structure $J$, the associated functional is
\begin{equation}\label{eq:I}
 I_J(h')=\int_X\ip{Q}{\HH_J(h')}\,dV.
\end{equation}
Write $h'=kA$ as above.  Since $\mathcal L_J$ is a real-linear differential operator, integration by parts moves all derivatives of $A$ onto the smooth coefficients and $Q$.  The result is a smooth $k$-self-adjoint endomorphism $S_J$, characterised by
\[
 I_J(h')=\int_X\tr(S_JA)\,dV,\qquad A=k^{-1}h'.
\]
Thus $S_J=\mathcal L_J^*Q$: the formal adjoint uses $dV$, the real-bilinear trace pairing $(S,A)\mapsto\tr(SA)$, and the tensor pairing above.  Uniqueness follows by testing arbitrary smooth Hermitian endomorphisms supported in a coordinate chart.  Thus the candidate metric enters only through $A$, without derivatives.  A uniformly negative-definite $S_J$ makes $I_J(h')<0$ for every positive-definite $A$, contradicting the nonnegativity required of a Griffiths-seminegative metric.

\begin{proposition}\label{crit:fixed}
If $S_J(x)\le-\eta\Id_F$ for every $x\in X$, for some $\eta>0$, then $(F,J)$ admits no smooth Griffiths-seminegative Hermitian metric. Equivalently, its dual admits no smooth Griffiths-semipositive Hermitian metric.
\end{proposition}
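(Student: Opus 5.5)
The argument is by contradiction, directly from the setup of \cref{sec:adjoint}. Suppose $h'$ is a smooth Griffiths-seminegative Hermitian metric on $(F,J)$, and put $A=k^{-1}h'$. Then $A$ is a smooth $k$-self-adjoint endomorphism, and it is positive definite at every point because $h'$ is a metric. I will compute $I_J(h')$ in two ways and obtain contradictory signs.

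\emph{Nonnegativity of $I_J(h')$.} By Forstneri\v c--Kusakabe, $g_{h'}$ is plurisubharmonic on the total space of $(F,J)$. Hence its Levi form is nonnegative on every $(1,0)$ tangent vector, in particular on the horizontal lifts \eqref{eq:lift}. This gives $\HH_J(h')(u,v)\ge0$ for all $x$, $u$, $v$. Since $Q_x$ is separable, the pairing remark preceding \eqref{eq:I} gives $\ip{Q_x}{\HH_J(h')_x}\ge0$ pointwise. The argument is by linearity on finite cone combinations of $P_u\otimes P_v$, together with continuity of the trace pairing to pass to the closure. Integrating against the positive volume form $dV$ yields $I_J(h')\ge0$.

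\emph{Strict negativity of $I_J(h')$.} Using the adjoint identity $I_J(h')=\int_X\tr(S_JA)\,dV$, which is valid because $X$ is compact without boundary and $A$ is smooth, I estimate the integrand pointwise. Work in a $k$-unitary frame at $x$, so that $S_J$ and $A$ are Hermitian matrices and $A>0$. Write $A=A^{1/2}A^{1/2}$. Then
\[
\tr(S_JA)=\tr\bigl(A^{1/2}S_JA^{1/2}\bigr)\le-\eta\,\tr(A)<0,
\]
because $A^{1/2}(S_J+\eta\Id_F)A^{1/2}\le0$. By continuity and compactness, $\tr(A)\ge c>0$ on $X$, so $I_J(h')\le-\eta c\int_XdV<0$. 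This contradicts the previous step, so no smooth Griffiths-seminegative metric exists on $(F,J)$.

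\emph{Dual statement.} The equivalence for the dual bundle follows from the identity $R^{k^*}=-R^t$ recorded earlier. A metric on $F^*$ is Griffiths semipositive exactly when its dual metric on $F^{**}=F$ is Griffiths seminegative, and dualising metrics is a bijection.

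\emph{Main obstacle.} The only delicate step is the justification of the adjoint identity, namely that integration by parts produces no boundary terms and that $S_J$ is well defined independently of charts. The paper already asserts both, so I will use them directly. The linear-algebra estimate $\tr(SA)\le-\eta\tr A$ for $S\le-\eta\Id$ and $A\ge0$ is routine.
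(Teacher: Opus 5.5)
Your proof is correct and is essentially the paper's argument. You get $I_J(h')\ge0$ from nonnegativity of the horizontal Levi form together with separability of $Q$. You then get $\tr(S_JA)\le-\eta\tr A$ from the trace of $A^{1/2}(-S_J-\eta\Id_F)A^{1/2}$, so $I_J(h')<0$, and the dual equivalence follows from $R^{k^*}=-R^t$, as in the paper.
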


\begin{proof}
For such a metric $h'$, the nonnegativity of the horizontal Levi form in \eqref{eq:Hdirect} and separability imply $I_J(h')\ge0$. On the other hand, $A=k^{-1}h'$ is positive definite and
\[
 \tr(S_JA)\le-\eta\tr A.
\]
To justify this inequality without commuting $S_J$ and $A$, take the trace of the positive semidefinite matrix $A^{1/2}(-S_J-\eta I)A^{1/2}$. Integrating yields $I_J(h')<0$, a contradiction.
\end{proof}

To use this obstruction beyond the split bundle, its strict negative bound must survive perturbations of the holomorphic structure.  We therefore keep $k$, $Q$, and $dV$ fixed and compare the operators in \cref{eq:horizontal} for nearby $J$ on the same smooth bundle.  Relative to the operator $\bar\partial_{J_0}$ of a fixed reference holomorphic structure $J_0$, the operator of a nearby holomorphic structure $J$ has the form
\[
 \bar\partial_J=\bar\partial_{J_0}+\alpha,
 \qquad \alpha\in\Omega^{0,1}(\End F).
\]
All $C^m$-neighbourhoods and convergence statements for holomorphic structures below refer to their associated Dolbeault operators on the fixed smooth bundle.  The topology is induced by the $C^m$ coefficient norms of $\alpha=\bar\partial_J-\bar\partial_{J_0}$, computed in a fixed finite system of smooth $k$-unitary local trivialisations. Equivalently, these norms may be computed using a fixed reference connection.  With $k$ fixed, the difference of the corresponding Chern connections is $b=\alpha-\alpha^{*k}$, where $\alpha^{*k}$ combines the $k$-adjoint on the coefficients with complex conjugation of the differential-form part.

\begin{proposition}\label{prop:persistence}
Let $J_0$ be a holomorphic structure and write the Chern connection of another holomorphic structure $J$ as $\DD_J=\DD_{J_0}+b$.  There are constants $C,\varepsilon_0>0$, depending on $J_0$, $Q$, the fixed Hermitian metrics on $T^{1,0}X$ and $F$, the volume form $dV$, and the chosen norms, such that
\begin{equation}\label{eq:continuity}
 \norm{S_J-S_{J_0}}_{C^0}
 \le C\left(\norm b_{C^1}+\norm b_{C^0}^2\right)
\end{equation}
whenever $\norm b_{C^1}<\varepsilon_0$.  In particular, a strict negative upper bound for $S_{J_0}$ persists under sufficiently small deformations through holomorphic structures.
\end{proposition}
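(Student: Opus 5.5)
The plan is to work through the covariant formula of \cref{prop:covariant} rather than the coordinate expression \eqref{eq:Hdirect}. With $k$, $Q$, $dV$ and $\nabla^X$ fixed, $\mathcal L_J$ depends on $J$ only through two ingredients. The first is the induced connection $\nabla^J$ on $\End(F)$. The second is the curvature $R^J$ of $\DD_J$. Writing $\DD_J=\DD_{J_0}+b$ with $b\in\Omega^1(\End F)$ and $b^{*k}=-b$, we have
\[
 \nabla^J A=\nabla^{J_0}A+[b,A],\qquad
 R^J=R^{J_0}+\DD_{J_0}b+b\wedge b.
\]
Substituting into \eqref{eq:Hcovariant}, expand $(\nabla^J)^2A$:
\[
 (\nabla^J)^2A=(\nabla^{J_0})^2A+[\nabla^{J_0}b,A]+2[b,\nabla^{J_0}A]_{\mathrm{sym}}+[b,[b,A]],
\]
where the middle term is schematic, with the tangent slots placed appropriately. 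Hence
\[
 \mathcal L_J(A)-\mathcal L_{J_0}(A)=\mathcal P_1(b)(\nabla^{J_0}A)+\mathcal P_0(b)(A),
\]
where $\mathcal P_1(b)$ is a zeroth-order operator that is linear in $b$. The operator $\mathcal P_0(b)$ is built from $\nabla^{J_0}b$, from $R$-type terms in $\DD_{J_0}b$, and from quadratic expressions $b\,b$. All coefficients are universal bilinear or trilinear algebraic expressions, so pointwise
\[
 |\mathcal P_1(b)|\lesssim|b|,\qquad |\mathcal P_0(b)|\lesssim|\nabla b|+|b|^2 .
\]

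Next, compute the formal adjoint with respect to $\int_X\langle Q,\cdot\rangle\,dV$ and the trace pairing, exactly as $S_J=\mathcal L_J^*Q$ was defined. For the zeroth-order part, the contribution is $\mathcal P_0(b)^*Q$, which is pointwise bounded by $C(|\nabla b|+|b|^2)\,|Q|$. For the first-order part, integrate by parts once, moving $\nabla^{J_0}$ off $A$. This produces $-(\nabla^{J_0})^*\bigl(\mathcal P_1(b)^*Q\bigr)$ together with a divergence correction from $dV$ relative to the Kähler volume. By the Leibniz rule this involves $\nabla b\cdot Q$, $b\cdot\nabla Q$ and $b\cdot Q\cdot(\text{smooth coefficients})$, each bounded by $C\|b\|_{C^1}$.

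Uniqueness of $S_J$, already noted, lets us identify
\[
 S_J-S_{J_0}=\mathcal P_0(b)^*Q-(\nabla^{J_0})^*\bigl(\mathcal P_1(b)^*Q\bigr)+(\text{terms as above}).
\]
Taking $C^0$ norms gives \eqref{eq:continuity}. The quadratic terms in $\nabla b$ do not arise, because $(\nabla^J)^2$ is at most quadratic in $b$ with no derivative on both factors. When $\|b\|_{C^1}<\varepsilon_0$, any product such as $|b|\,|\nabla b|$ is absorbed into $C\|b\|_{C^1}$. The constant $C$ depends on $\|Q\|_{C^1}$, on $dV$, on the curvature and Christoffel data of $J_0$, and on the comparison constants between the trivialisation norms and the $\nabla^{J_0}$-norms.

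The main obstacle is bookkeeping: checking that $\mathcal L_J$ genuinely has this structure, with no second derivatives of $b$ and no dependence on $J$ beyond $b$. That $\bar\partial_J$ appears only through $\DD_J$ is guaranteed by \cref{prop:covariant}, since its right side is tensorial in $\nabla^J$ and $R^J$.

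For the "in particular" statement, $b=\alpha-\alpha^{*k}$ gives
\[
 \|b\|_{C^1}\le 2\|\alpha\|_{C^1}.
\]
Hence if $S_{J_0}\le-\eta\Id_F$, then for $\alpha$ small in $C^1$ we get $S_J\le-\tfrac{\eta}{2}\Id_F$. \Cref{crit:fixed} then applies to $J$.
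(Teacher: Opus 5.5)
Your proposal is correct and follows essentially the same route as the paper: you expand the covariant formula of \cref{prop:covariant} in $b$, observe that the difference operator has first-order coefficients linear in $b$ and zeroth-order coefficients built from $\nabla^{J_0}b$ or $b\,b$, and then integrate by parts once against $Q$ and $dV$. The only difference is cosmetic. You integrate by parts covariantly, via $(\nabla^{J_0})^*$ plus a density correction, whereas the paper does it in local real coordinates with a partition of unity. Your remark about absorbing $|b|\,|\nabla b|$ is unnecessary, since no such product arises.
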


\begin{proof}
Use a finite system of fixed smooth $k$-unitary trivialisations and a partition of unity. The $(0,1)$-part of a unitary connection determines its $(1,0)$-part, so $C^m$ convergence of Dolbeault operators gives the corresponding convergence of their Chern connections with $k$ fixed.

The Chern connection $\DD_{J_0}$ (resp.\ $\DD_J$) induces the connection $\nabla^0$ (resp.\ $\nabla^J$) on $\End(F)$ and has curvature $R^0$ (resp.\ $R^J$) on $F$.  All second derivatives below use the product connection with $\nabla^X$, and the displayed coefficient calculation is made at the centre of Kähler-normal coordinates.  Write
\[
 \Delta_{i\bar j}A
 :=\half\Bigl((\nabla^J_i\nabla^J_{\bar j}+\nabla^J_{\bar j}\nabla^J_i)
 -(\nabla^0_i\nabla^0_{\bar j}+\nabla^0_{\bar j}\nabla^0_i)\Bigr)A.
\]
Then
\[
 \Delta_{i\bar j}A=[b_i,\nabla^0_{\bar j}A]+[b_{\bar j},\nabla^0_iA]+\half[\nabla^0_i b_{\bar j}+\nabla^0_{\bar j}b_i,A]+\half\bigl([b_i,[b_{\bar j},A]]+[b_{\bar j},[b_i,A]]\bigr).
\]
The curvature difference is
\[
 R^J_{i\bar j}-R^0_{i\bar j}
 =\nabla^0_i b_{\bar j}-\nabla^0_{\bar j}b_i+[b_i,b_{\bar j}].
\]
It contributes the corresponding anticommutator in \cref{eq:Hcovariant}. There is no second derivative of $A$ in the difference of the operators. The coefficients of its first derivatives are linear in $b$, and the zero-order coefficients are linear in $\nabla^0b$ or quadratic in $b$.

Pair with the fixed smooth $Q$ in \cref{eq:I}. On a coordinate neighbourhood $U\subset X$ with real coordinates $x=(x_1,\ldots,x_{2n})$, write $dV=\rho\,dx$. After multiplying by the corresponding function in the partition of unity, the local coefficients $B_j$ and $B_0$ have compact support in $U$.  For each real component $a$ of $A$ in a fixed orthonormal frame of $\Herm(F)$, integration by parts therefore gives
\[
 \int_U\Bigl(\sum_{j=1}^{2n}B_j\frac{\partial a}{\partial x_j}+B_0a\Bigr)\rho\,dx
 =\int_U\Bigl(B_0-\rho^{-1}\sum_{j=1}^{2n}\frac{\partial(\rho B_j)}{\partial x_j}\Bigr)a\rho\,dx.
\]
There is no boundary term because the coefficients have compact support in $U$.  Each $B_j$ for $j\ge1$ is linear in $b$ with fixed smooth coefficients, while $B_0=O(\nabla^0b)+O(b)+O(b^2)$. Thus the displayed coefficient involves at most first derivatives of $b$ and is bounded by the right side of \cref{eq:continuity}. Constants incorporate the finite coordinate cover, fixed coefficients, and their derivatives. The resulting coefficient $S_J-S_{J_0}$ is $k$-self-adjoint because the difference of the original functionals is real on $k$-self-adjoint tests. Summing over components and coordinate charts proves \cref{eq:continuity}.
\end{proof}

\subsection{The split bundle and the formal adjoint}\label{sec:split}

On the square torus, the curvature and first Chern class of each line bundle in the split construction are represented by a constant Hermitian matrix.  We choose these matrices so that the split bundle satisfies the strict adjoint obstruction and has the same Chern classes as the ample comparison bundle.  Write $c_i(G)$ for the $i$th Chern class of a vector bundle $G$, and use a dot for the intersection pairing on $X$.

The uniformisation of $C$ and its order-four automorphism \cite[Corollary~VI.5.1.1 and Example~III.4.4]{Silverman} identify $X$ analytically with $\C^2/\Lambda$, where $\Lambda=(\Z+i\Z)^2$. Write $z=(z_1,z_2)^t$ for the standard coordinates on $\C^2$. If $M$ is Hermitian with integral diagonal entries and Gaussian-integer off-diagonal entries, the alternating form
\[
 E_M(\gamma,\delta)=\operatorname{Im}(\bar\gamma^{\,t}M\delta),\qquad\gamma,\delta\in\Lambda
\]
is integer-valued on $\Lambda$.  In terms of the first-variable-linear Hermitian form $\mathcal H_M(u,v)=v^{\dagger}Mu$, this is $E_M(\gamma,\delta)=-\operatorname{Im}\mathcal H_M(\gamma,\delta)$.  We next construct a line bundle from $M$ and describe its first Chern class.  The Appell--Humbert theorem \cite[Theorem~2.2.3]{BirkenhakeLange} associates a holomorphic line bundle $L_M$ to $\mathcal H_M$, and the integral of $c_1(L_M)$ over the oriented lattice parallelogram with ordered edge vectors $\gamma,\delta\in\Lambda$ equals $E_M(\gamma,\delta)$.  Birkenhake--Lange use the alternating form $\operatorname{Im}\mathcal H_M=-E_M$; their conversion to a de Rham representative is given in \cite[Lemma~3.6.4]{BirkenhakeLange}. The translation-invariant $(1,1)$-form representing $c_1(L_M)$ is
\[
 \omega_M=\frac{i}{2}\,dz^tM^t\,d\bar z.
\]

To make the Appell--Humbert trivialisation explicit, choose a semicharacter $\chi_M:\Lambda\to\{w\in\C:|w|=1\}$ satisfying
\[
 \chi_M(\gamma+\delta)=\chi_M(\gamma)\chi_M(\delta)\exp\bigl(\pi iE_M(\gamma,\delta)\bigr),
 \qquad \gamma,\delta\in\Lambda,
\]
and set
\[
 j_\gamma(z)=\chi_M(\gamma)
 \exp\left(\pi\mathcal H_M(z,\gamma)+\frac{\pi}{2}\mathcal H_M(\gamma,\gamma)\right).
\]
Since $\mathcal H_M(\delta,\gamma)-\mathcal H_M(\gamma,\delta)=2iE_M(\gamma,\delta)$, these holomorphic multipliers satisfy
\[
 \frac{j_\gamma(z+\delta)j_\delta(z)}{j_{\gamma+\delta}(z)}
 =\exp\left(-\pi iE_M(\gamma,\delta)
       +\frac{\pi}{2}\bigl(\mathcal H_M(\delta,\gamma)-\mathcal H_M(\gamma,\delta)\bigr)\right)=1.
\]
Thus $(z,\xi)\mapsto(z+\gamma,j_\gamma(z)\xi)$ defines a lattice action. We take its quotient as our choice of $L_M$.

In the Appell--Humbert trivialisation on $\C^2$, with fibre coordinate $\xi\in\C$, the metric is $\|\xi\|_z^2=|\xi|^2\exp(-\pi z^{\dagger}Mz)$ \cite[\S3.4, p.~57, equation~(3)]{BirkenhakeLange}. The identity
\[
 |j_\gamma(z)|^2\exp\bigl(-\pi\mathcal H_M(z+\gamma,z+\gamma)\bigr)
 =\exp\bigl(-\pi\mathcal H_M(z,z)\bigr)
\]
shows that this metric descends to $L_M$. Its Chern curvature $R^{L_M}=\pi\,dz^tM^t\,d\bar z$ satisfies $\frac{i}{2\pi}R^{L_M}=\omega_M$.  We use these constant-curvature metrics; only their curvature forms, not the metrics themselves, are asserted to be translation-invariant. With $z_j=x_j+iy_j$, the wedge product is
\[
 \omega_M\wedge\omega_M
 =2\det(M)\,dx_1\wedge dy_1\wedge dx_2\wedge dy_2.
\]
The square-lattice fundamental domain has volume $1$ in these real coordinates, so
\[
 \int_Xc_1(L_M)^2=2\det M,
\]
and, for any two such matrices $M,N$, polarisation gives
\begin{equation}\label{eq:mixed-intersection}
 c_1(L_M)\cdot c_1(L_N)=\det(M+N)-\det M-\det N.
\end{equation}

Fix the polarisation
\[
 H=\OO_C(4[0])\boxtimes\OO_C(5[0]),\qquad
 h=c_1(H),\qquad h^2=\int_X h\smile h=40.
\]
Here $0\in C$ is the point at infinity, $[0]$ is its divisor, and $\OO_C(m[0])$ is the associated line bundle.  For the projections $p_1,p_2:X=C\times C\to C$ onto the first and second factors, the exterior tensor product is $L\boxtimes M=p_1^*L\otimes p_2^*M$ for line bundles $L,M$ on $C$; thus $H=p_1^*\OO_C(4[0])\otimes p_2^*\OO_C(5[0])$.  The line bundles $\OO_C(4[0])$ and $\OO_C(5[0])$ have degrees $4$ and $5$, respectively, so both are very ample on the elliptic curve $C$; hence their exterior tensor product $H$ is very ample.

Under the Appell--Humbert identification, the polarisation class $h=c_1(H)$ is represented by
\[
 \Hmat=\begin{pmatrix}4&0\\0&5\end{pmatrix}.
\]
The $H$-slope of a torsion-free sheaf $\mathcal F$ of positive rank is
\[
 \mu_H(\mathcal F)=\frac{c_1(\mathcal F)\cdot h}{\rk\mathcal F}.
\]
Slope stability requires $\mu_H(\mathcal G)<\mu_H(\mathcal F)$ for every coherent subsheaf $\mathcal G\subset\mathcal F$ with $0<\rk\mathcal G<\rk\mathcal F$; semistability uses $\le$.

We choose two Hermitian matrices with integral diagonal and Gaussian-integer off-diagonal entries whose sum is $5\Hmat$ and whose associated first Chern classes have equal intersection with $h$, so that the resulting split bundle has determinant $H^{\otimes5}$ and equal-slope line factors.  A convenient choice is
\begin{equation}\label{eq:split-matrices}
 A_1=\begin{pmatrix}8&1+13i\\1-13i&15\end{pmatrix},\qquad
 A_2=\begin{pmatrix}12&-1-13i\\-1+13i&10\end{pmatrix}.
\end{equation}
Choose a line bundle $L_1$ whose first Chern class is represented by $A_1$ under this identification, and take
\[
 L_2:=H^{\otimes5}\otimes L_1^{-1},\qquad
 E_0=L_1\oplus L_2,\qquad F_0=E_0^*.
\]
Then $c_1(L_2)$ is represented by $A_2$ and $\det E_0=H^{\otimes5}$ as line bundles. By the $\partial\bar\partial$ lemma \cite[Ch.~VI, Lemma~(8.6)]{DemaillyCADG}, a Hermitian metric on each of the prescribed line bundles $H,L_1$ can be adjusted so that its Chern curvature is the translation-invariant representative specified above.  Equip $L_2=H^{\otimes5}\otimes L_1^{-1}$ with the induced tensor-product metric.  Its curvature is the form associated with $A_2=5\Hmat-A_1$, and the determinant metric is the fifth tensor power of the chosen metric on $H$.

Direct multiplication gives
\[
 A_1+A_2=5\Hmat,\qquad \det A_1=\det A_2=-50,\qquad \det(A_1-A_2)=-700.
\]
For $s=1,2$, since $\det\Hmat=20$ and $\tr(\Hmat^{-1}A_s)=5$, \cref{eq:mixed-intersection} gives
\[
 c_1(L_s)\cdot h=20\cdot5=100.
\]
Thus the two line bundles have the same $H$-slope. They also have the same Euler characteristic, namely $\chi(L_s)=c_1(L_s)^2/2=-50$, by Riemann--Roch for line bundles on a complex torus \cite[Theorem~3.6.3]{BirkenhakeLange}.

Since $\int_Xc_1(L_s)^2=2\det A_s=-100$ and $c_1(E_0)=5h$, the Chern numbers and Euler characteristic are
\[
 c_1(E_0)=5h,\qquad \int_Xc_1(E_0)^2=1000,\qquad
 \int_Xc_2(E_0)=600,\qquad \chi(E_0)=-100.
\]
For the later moduli-space argument, the rank and Chern classes are recorded together in the Mukai vector.  Since $X$ has Todd class $1$, it is
\[
 v(\mathcal F)=\operatorname{ch}(\mathcal F)=(r,c,s),
 \qquad r=\rk\mathcal F,\quad c=c_1(\mathcal F),\quad s=\chi(\mathcal F),
\]
where $\operatorname{ch}$ is the Chern character \cite[p.~101]{Mukai1984}.  We use the Mukai pairing
\[
 \langle(r,c,s),(r',c',s')\rangle=c\cdot c'-rs'-r's,
 \qquad v^2=\langle v,v\rangle.
\]
This is also the convention used by Yoshioka \cite[\S1]{Yoshioka}.  The Chern classes computed above give the Mukai vector of $E_0$ and its square:
\[
 v(E_0)=(2,5h,-100),\qquad v(E_0)^2=(5h)^2-2\cdot2\cdot(-100)=1400.
\]
Write
\[
 \mathbf v:=v(E_0)=(2,5h,-100).
\]
This Mukai vector is primitive: any nontrivial common divisor must divide the rank $2$, whereas $5h$ has degree $25$ on the second coordinate elliptic curve and is therefore not divisible by $2$ in integral cohomology.

For an integer $m\ge1$ and a locally free rank-two cokernel $S_m$ in a Steiner resolution
\[
 0\longrightarrow\OO_{\PP^2}(-1)^m
 \longrightarrow\OO_{\PP^2}^{m+2}
 \longrightarrow S_m\longrightarrow0,
\]
one has $c_1(S_m)=m\ell$ and $c_2(S_m)=m(m+1)\ell^2/2$, where $\ell:=c_1(\OO_{\PP^2}(1))$.  Let $f:X\to\PP^2$ be the morphism constructed in \cref{sec:comparison}, with $f^*\OO(1)=H$.  Suppose that line bundles $N_1,N_2$ on $X$ satisfy $\mu_H(N_1)=\mu_H(N_2)$, $\chi(N_1)=\chi(N_2)$, and $c_i(N_1\oplus N_2)=c_i(f^*S_m)$ for $i=1,2$.  If $B_s$ represents $c_1(N_s)$ for $s=1,2$, the matrices $D_s=\Hmat^{-1/2}B_s\Hmat^{-1/2}$ satisfy
\[
 D_1+D_2=mI,\qquad \operatorname{tr}D_s=m,
 \qquad \det D_s=-\frac m2.
\]
Since $\det D_s=-m/2<0$, each Hermitian matrix $D_s$ has one positive and one negative eigenvalue.  The trace and determinant identities above give
\[
 \operatorname{tr}(D_s^2)=m^2+m,
 \qquad
 \operatorname{tr}\!\left(D_s\bigl((m+3/4)I-D_s\bigr)\right)=-\frac m4.
\]
For $m=5$, the line bundles $L_1,L_2$ chosen above satisfy these conditions, with $B_s=A_s$, and the displayed identities give the constants $23/4$ and $-5/4$ below.  The Gaussian-integer off-diagonal entry $1+13i$ realizes the determinant condition: with diagonal entries $8$ and $15$, the determinant condition requires $|1+13i|^2=170=1^2+13^2$.  Finally, the difference class $c_1(L_1)-c_1(L_2)$ has square $-1400$ and gives the $700$-dimensional extension space $\operatorname{Ext}^1(L_2,L_1)=H^1(X,L_1\otimes L_2^{-1})$.  A nonzero class supplies the nonsplit extensions used in \cref{sec:bridge}.  Thus the same lattice data produce both the curvature obstruction and the extensions.

Equip $X$ with the flat K\"ahler form and its volume form
\[
 \omega_H=\frac{i}{2}\,dz^{t}\Hmat\,d\bar z,
 \qquad dV_H=\frac{\omega_H^2}{2}.
\]
The corresponding tangent Hermitian product is $\langle u,v\rangle_H=v^{\dagger}\Hmat u$ in the coordinates $z$.  Take $dV=dV_H$ in the definition \eqref{eq:I} of $I_J$; this volume form is translation invariant.  Write $F_s=L_s^{-1}$ for $s=1,2$, so that $F_0=F_1\oplus F_2$.  For a Hermitian metric $g$ on a line bundle $L$, our curvature convention gives
\[
 c_1(L)=\left[\frac{i}{2\pi}R^g\right].
\]

We now express the curvature $R^{L_M}=\pi\,dz^tM^t\,d\bar z$ computed above as a tangent endomorphism.  The matrix $M^t$ is the coefficient matrix of $R^{L_M}/\pi$.  By contrast, the Hermitian endomorphism $C_M$ representing the real quadratic form $(2/i)\omega_M(u,\bar u)$ satisfies
\[
 \langle C_Mu,u\rangle_H
 =\frac{2}{i}\omega_M(u,\bar u)
 =\frac1\pi R^{L_M}(u,\bar u).
\]
In the coordinate frame $(\partial/\partial z_j)$ its matrix is $\Hmat^{-1}M$.  In the $H$-unitary frame obtained by multiplying that frame by $\Hmat^{-1/2}$, its matrix is
\[
 C_M=\Hmat^{-1/2}M\Hmat^{-1/2}.
\]
Indeed, the transpose in the form coefficient is precisely what converts $u^tM^t\bar u$ into the real Hermitian quadratic expression $u^{\dagger}Mu$.  We use the same symbol for this endomorphism and its matrix in that frame.  For $L_s$ we therefore write
\[
 C_s=\Hmat^{-1/2}A_s\Hmat^{-1/2}.
\]
Thus $R^{L_s}(u,\bar u)=\pi\langle C_su,u\rangle_H$ and $R^{F_s}(u,\bar u)=-\pi\langle C_su,u\rangle_H$ for $F_s=L_s^{-1}$.

We seek positive endomorphisms $Q_s$ satisfying $\operatorname{tr}(C_sQ_s)<0$: the weights must favour the negative curvature directions.  The following calculation gives such a choice.  Since $A_1+A_2=5\Hmat$, one has
\[
 C_1+C_2=5I.
\]
Write
\[
 C_1=\lambda I+K,\qquad C_2=\lambda I-K,
 \qquad \lambda=\frac52.
\]
Then the trace-free Hermitian endomorphism $K$ satisfies
\[
 K^2=d^2I,\qquad d=\frac{\sqrt{35}}2.
\]
Thus $d>\lambda$: the trace-free parts $K$ and $-K$ have eigenvalues $d$ and $-d$, so both $C_s$ are indefinite despite their positive traces.  If $d<\tau<d^2/\lambda$, then
\[
 Q_1=\tau I-K,\qquad Q_2=\tau I+K
\]
are positive definite and
\[
 \operatorname{tr}(C_sQ_s)=2(\lambda\tau-d^2)<0,\qquad s=1,2.
\]
With $P_s$ denoting the orthogonal projection onto $F_s$ for the split metric specified below, the tensor $Q_1\otimes P_1+Q_2\otimes P_2$ below pairs $C_s$ with $Q_s$ on the line factor $F_s$, so each summand contributes the negative trace just computed.

For the present matrices we take $\tau=13/4$, equivalently
\begin{equation}\label{eq:Qs}
 Q_s=\frac{23}{4}I-C_s.
\end{equation}
The characteristic polynomial of $C_s$ is $t^2-5t-5/2$, so its eigenvalues are $(5\pm\sqrt{35})/2$ and the eigenvalues of $Q_s$ are $(13\pm2\sqrt{35})/4$, both positive.  Moreover,
\begin{equation}\label{eq:Qnumerics}
 \operatorname{tr}Q_s=\frac{13}{2},\qquad
 \det Q_s=\frac{29}{16},\qquad
 \operatorname{tr}(C_sQ_s)=-\frac54.
\end{equation}
The last identity also follows directly from $\operatorname{tr}C_s^2=30$, since $\operatorname{tr}(C_sQ_s)=115/4-30=-5/4$.
Let $k$ be the orthogonal sum of constant-curvature metrics on $F_1,F_2$. The orthogonal projections $P_s:F_0\to F_s$ are parallel for its Chern connection. The global separable tensor is
\begin{equation}\label{eq:Qsplit}
 Q_0=Q_1\otimes P_1+Q_2\otimes P_2.
\end{equation}
This is a smooth separable tensor which, as an endomorphism of $T^{1,0}X\otimes F_0$, is positive definite. To see separability directly, diagonalise each $Q_s$ in the flat tangent space. Its positive eigenvalues express $Q_s\otimes P_s$ as a positive linear combination of tensor products of rank-one orthogonal projections. The eigenvectors may be chosen invariant on the torus; no trivialisation of the line bundles is needed because $P_s$ is global.

\begin{proposition}\label{prop:split-adjoint}
For the tensor $Q_0$ in \eqref{eq:Qsplit} and the split holomorphic structure $J_0$ on $F_0$, the functional $I_{J_0}$ in \eqref{eq:I} satisfies
\begin{equation}\label{eq:negative-adjoint}
 I_{J_0}(h')=-\frac{5\pi}{4}\int_X\tr(k^{-1}h')\,dV_H
\end{equation}
for every smooth Hermitian form $h'$. Thus $S_{J_0}=\mathcal L_{J_0}^*Q_0=-(5\pi/4)\Id_{F_0}$.
\end{proposition}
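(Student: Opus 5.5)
The plan is to compute $I_{J_0}(h')$ directly, writing $h'=kA$ with $A=k^{-1}h'$ and using \cref{prop:covariant} to express the integrand. By definition $\ip{Q_0}{\HH_{J_0}(h')}$ is the pairing of $Q_0$ with the Hermitian quadratic expression $(u,v)\mapsto k(\mathcal L_{J_0}(A)(u,\bar u)v,v)$. By \eqref{eq:Hcovariant} this splits into a second-order part, the symmetrised covariant Hessian of $A$, and a zero-order part $-\half\{R,A\}$. I would treat the two parts separately: the Hessian part should integrate to zero, and the curvature part should give exactly $-\tfrac{5\pi}{4}\tr A$ pointwise.

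\emph{Step 1: the Hessian terms integrate to zero.} Everything in $Q_0$ is parallel. The $Q_s$ are constant in the flat $H$-unitary tangent frame, so $\nabla^XQ_s=0$. The projections $P_s$ are parallel for the Chern connection of the orthogonal sum of constant-curvature metrics. The volume form $dV_H$ is parallel as well. Hence the pointwise pairing $\ip{Q_0}{\half((\nabla^{J_0})^2A)_{i\bar j}+\half((\nabla^{J_0})^2A)_{\bar j i}}$ can be written as a contraction of second covariant derivatives of the scalar-valued tensor obtained by contracting $A$ against $Q_0$. Concretely, it equals
\[
 \sum_{i,j}(Q_0)^{\bar j i}\,\partial_i\partial_{\bar j}\bigl(\text{component of }\tr(P_sA)\bigr),
\]
which is a constant-coefficient second-order operator in flat coordinates applied to a smooth function on the compact torus $X$. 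Its integral vanishes by Stokes' theorem, or equivalently by periodicity on $\C^2/\Lambda$. I would verify this in the flat coordinates and a frame adapted to the splitting. In such a frame the connection forms are those of line bundles, and the commutator terms $[\Gamma,A]$ pair against $P_s$ to cancel or to produce total derivatives. The reason is that $\tr(P_s[\,\cdot\,,A])=0$ whenever the connection is diagonal.

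\emph{Step 2: the curvature term.} On $F_0=F_1\oplus F_2$ the curvature is diagonal:
\[
 R(u,\bar u)=-\pi\sum_s\langle C_su,u\rangle_H\,P_s .
\]
Take the pairing with $P_u\otimes P_v$ for a $k$-unit vector $v\in F_s$ and $u$ in the tangent space. Since $P_{s'}v=0$ for $s'\ne s$, only the $s$-th curvature factor survives, and
\[
 k\bigl(-\tfrac12\{R(u,\bar u),A\}v,v\bigr)=\pi\langle C_su,u\rangle_H\,k(Av,v).
\]
Expanding $Q_s$ in its eigenbasis then turns the tangent part into $\tr(Q_sC_s)$, and the fibre part into $\tr(P_sA)$. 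By \eqref{eq:Qnumerics} this gives
\[
 \ip{Q_0}{-\tfrac12\{R,A\}}=\pi\sum_s\tr(Q_sC_s)\tr(P_sA)=-\frac{5\pi}{4}\tr A .
\]
Combining this with Step 1 yields \eqref{eq:negative-adjoint}. The identification $S_{J_0}=-(5\pi/4)\Id_{F_0}$ then follows from the uniqueness of the formal adjoint stated before \cref{crit:fixed}.

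The main obstacle is bookkeeping of normalisations and signs rather than any conceptual point. Three things must be checked: that the trace pairing with the tangent factor, computed in the $H$-unitary frame, really produces $\tr(Q_sC_s)$ with the factor $\pi$ and no stray factor $2$; that the sign of $R^{F_s}=-\pi\langle C_s\cdot,\cdot\rangle$ combines with the $-\tfrac12$ in \eqref{eq:Hcovariant} to give $+\pi$; and that the mixed terms $((\nabla^{J_0})^2A)_{\bar j i}$ differ from $((\nabla^{J_0})^2A)_{i\bar j}$ only by $[R,A]$. The last point matters because $[R,A]$ is killed by the trace against the diagonal $P_s$ and so cannot spoil Step 1. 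As a check on signs I would first run the line-bundle case using \eqref{eq:scalar}.
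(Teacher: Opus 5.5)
Your proposal is correct and follows essentially the paper's argument. Both reduce the contribution of each summand $Q_s\otimes P_s$ to $\tr(Q_s\operatorname{Hess}_H(a_s))+\pi\tr(C_sQ_s)\,a_s$ with $a_s=\tr(P_sA)$, kill the Hessian term by translation invariance on the flat torus, and sum using $\tr(C_sQ_s)=-5/4$. The only difference is how the reduction is obtained. The paper observes that the horizontal lift at $v\in F_s$ is tangent to the total space of $F_s$ and applies the line-bundle formula \eqref{eq:scalar}, whereas you go through \cref{prop:covariant} and use $\nabla P_s=0$ and $\tr(P_s[R,A])=0$.
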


\begin{proof}
Write $a_s=h'|_{F_s}/k|_{F_s}$. The ratio is a globally defined smooth real function, even though a line frame is only local. The Chern connection preserves the splitting $F_0=F_1\oplus F_2$.  In a local holomorphic frame of $F_s$, write $k_s$ for the coefficient of $k|_{F_s}$.  For $v\in F_s$, formula \eqref{eq:lift} becomes
\[
 u^{\mathrm h,J_0}_v=(u,w),\qquad
 w=-k_s^{-1}(\partial_u k_s)v,
 \qquad w+k_s^{-1}(\partial_u k_s)v=0.
\]
In particular, $u^{\mathrm h,J_0}_v\in T_v^{1,0}F_s$. The Levi form on that tangent vector is consequently the Levi form of the restriction of $g_{h'}$ to $F_s$. Off-diagonal coefficients of $h'$ do not contribute.

For a smooth real function $a$ on the flat torus, let $\operatorname{Hess}_H(a)$ be the Hermitian endomorphism of $T^{1,0}X$ characterised by
\[
 \langle \operatorname{Hess}_H(a)u,u\rangle_H
 =\partial_u\partial_{\bar u}a.
\]
Its coordinate matrix is $\Hmat^{-1}(\partial_i\partial_{\bar j}a)^t$; in the $H$-unitary frame used above, it is
\[
 \Hmat^{-1/2}(\partial_i\partial_{\bar j}a)^t\Hmat^{-1/2}.
\]
The transpose distinguishes the coefficient matrix of $\partial\bar\partial a$ from the tangent endomorphism.  In a constant $\langle\cdot,\cdot\rangle_H$-orthonormal eigenbasis of $Q_s$, \cref{eq:scalar} gives
\[
 I_{J_0}(h')
 =\sum_{s=1}^2\int_X
 \left(\operatorname{tr}\bigl(Q_s\operatorname{Hess}_H(a_s)\bigr)
       +\pi\operatorname{tr}(C_sQ_s)a_s\right)dV_H.
\]
The eigenvectors of $Q_s$ are translation invariant and $dV_H$ is translation invariant.  Hence each directional mixed derivative integrates to zero, and therefore
\[
 \int_X\operatorname{tr}\bigl(Q_s\operatorname{Hess}_H(a_s)\bigr)dV_H=0.
\]
Applying \cref{eq:Qnumerics} and using $a_1+a_2=\tr(k^{-1}h')$ in the orthogonal splitting proves \cref{eq:negative-adjoint}.  In particular, the background test $h'=k$ gives $I_{J_0}(k)=-(5\pi/2)\operatorname{vol}_H(X)<0$, where $\operatorname{vol}_H(X)=\int_XdV_H$ is the volume of $X$.
\end{proof}

\begin{corollary}\label{cor:adjoint-open}
For the split holomorphic structure $J_0$ on $F_0$, the Hermitian metric $k$ constructed above, and the positive separable Hermitian tensor $Q_0$ in \eqref{eq:Qsplit}, there is a $C^2$-neighbourhood $\mathcal U$ of $J_0$ in the space of holomorphic structures on the fixed smooth bundle underlying $F_0$ such that
\[
 S_J=\mathcal L_J^*Q_0\le-\frac{5\pi}{8}\Id_{F_0}\qquad(J\in\mathcal U).
\]
Consequently, for every $J\in\mathcal U$, the dual bundle $(F_0,J)^*$ admits no smooth Griffiths-semipositive Hermitian metric.
\end{corollary}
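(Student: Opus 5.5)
The proof combines \cref{prop:split-adjoint}, \cref{prop:persistence} and \cref{crit:fixed}. First, \cref{prop:split-adjoint} gives the exact value $S_{J_0}=\mathcal L_{J_0}^*Q_0=-(5\pi/4)\Id_{F_0}$ for the fixed data $k$, $Q_0$ and $dV=dV_H$. These data stay fixed throughout, so \cref{prop:persistence} applies with this $J_0$. We obtain constants $C,\varepsilon_0>0$ such that
\[
 \norm{S_J-S_{J_0}}_{C^0}\le C\bigl(\norm b_{C^1}+\norm b_{C^0}^2\bigr)
\]
whenever the Chern connection difference $b=\DD_J-\DD_{J_0}$ satisfies $\norm b_{C^1}<\varepsilon_0$.

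Next we convert this into a neighbourhood of $J_0$ in terms of Dolbeault operators. With $k$ fixed, $b=\alpha-\alpha^{*k}$ for $\alpha=\bar\partial_J-\bar\partial_{J_0}$. Hence $\norm b_{C^1}\le 2\norm\alpha_{C^1}$ in the fixed $k$-unitary trivialisations, and the $C^2$ topology on $\alpha$ is finer than the $C^1$ one. Choose $\delta>0$ with $2\delta<\varepsilon_0$ and $C(2\delta+4\delta^2)\le 5\pi/8$. Let $\mathcal U$ be the set of holomorphic structures with $\norm{\alpha}_{C^2}<\delta$. Then for $J\in\mathcal U$ the operator norm of $S_J-S_{J_0}$ is at most $5\pi/8$ at every point. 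Since $S_J$ is $k$-self-adjoint, this gives
\[
 S_J\le S_{J_0}+\tfrac{5\pi}{8}\Id_{F_0}=-\tfrac{5\pi}{8}\Id_{F_0}
\]
pointwise. The $C^0$ norm must control the pointwise operator norm, with the constant absorbed into $C$, because norms on the finite-dimensional fibres are comparable uniformly over the compact $X$.

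Finally we apply \cref{crit:fixed} with $\eta=5\pi/8$. It shows that $(F_0,J)$ admits no smooth Griffiths-seminegative metric, and equivalently that its dual admits no smooth Griffiths-semipositive metric. This uses the duality $R^{k^*}=-R^t$ recorded earlier, which identifies semipositive metrics on $(F_0,J)^*$ with seminegative metrics on $(F_0,J)$.

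The only point needing care is the comparison between the $C^m$ norms of $\alpha$ and of $b$. Complex conjugation of the form part and $k$-adjoints of the coefficients are pointwise isometric in $k$-unitary frames, so the bound $\norm b_{C^1}\le 2\norm\alpha_{C^1}$ should hold up to a harmless constant. The argument is otherwise a direct combination of the three propositions, and I expect no real obstacle.
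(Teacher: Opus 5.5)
Your proposal is correct and matches the paper's proof: you obtain $S_{J_0}=-(5\pi/4)\Id_{F_0}$ from \cref{prop:split-adjoint}, shrink the $C^2$-neighbourhood via \cref{prop:persistence} until $\norm{S_J-S_{J_0}}_{C^0}\le 5\pi/8$, and conclude with \cref{crit:fixed} at $\eta=5\pi/8$. Your extra bookkeeping for $b=\alpha-\alpha^{*k}$ and for the pointwise operator-norm bound on the self-adjoint difference spells out steps the paper leaves implicit.
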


\begin{proof}
By \cref{prop:split-adjoint}, $S_{J_0}=-(5\pi/4)\Id_{F_0}$. Apply \cref{prop:persistence} and shrink the $C^2$-neighbourhood of $J_0$ so that $\norm{S_J-S_{J_0}}_{C^0}<5\pi/8$.  This gives the asserted pointwise bound.  Proposition~\ref{crit:fixed} then excludes Griffiths-seminegative metrics on $(F_0,J)$, equivalently Griffiths-semipositive metrics on the dual bundle.
\end{proof}

\begin{remark}
Corollary~\ref{cor:adjoint-open} concerns Hermitian squared norms $G(z,v)=v^{\dagger}h(z)v$.  A strongly pseudoconvex complex Finsler squared norm on a holomorphic bundle $F\to X$ is a continuous function $G:F\to[0,\infty)$, positive and smooth away from the zero section, satisfying
\[
 G(z,\lambda v)=|\lambda|^2G(z,v),\qquad
 \Bigl(\frac{\partial^2G}{\partial v_a\partial\bar v_b}\Bigr)_{a,b}>0
 \quad(v\ne0),
\]
where $\lambda\in\C$ and $(v_a)$ are linear fibre coordinates.  This homogeneity does not require $G$ to have the form $v^{\dagger}h(z)v$.  The operator $\mathcal L_J$ acts on Hermitian forms $h=kA$, so the argument proving the corollary does not apply to general Finsler squared norms.  Negative Kobayashi curvature means that the metric induced by $G$ on $\OO_{\PP(F)}(-1)$ has negative Chern curvature \cite[\S3, equation~(3.19), and Theorem~4.1]{KobayashiFinsler}.  Kobayashi's ampleness criterion \cite[Theorem~5.1]{KobayashiFinsler} states that such a Finsler squared norm with negative curvature exists if and only if $F^*$ is ample.
\end{remark}
\section{An ample stable comparison bundle}\label{sec:comparison}

Dolgachev and Kapranov used Steiner bundles to study logarithmic differential forms along hyperplane arrangements \cite{DolgachevKapranov}.  A Steiner bundle is encoded by a matrix of linear forms, and its resolution makes the rank and Chern classes explicit.  This gives a source for our comparison bundle: the resolution below, with source rank five and target rank seven, supplies the numerical data used in the split construction, and a suitable matrix yields ampleness and stability.  Pullback to $X$ then gives a comparison bundle with the same Chern classes as the split bundle.

We choose the matrix so that two rank conditions hold: one makes its cokernel locally free, and the other makes the tautological morphism finite, which yields ampleness.  Let $V=\C^5$ and $U=\C^3$, and write $\Gr(k,W)$ for the Grassmannian of $k$-dimensional linear subspaces of a vector space $W$.  Then $\Gr(7,V\otimes U)$ has dimension $56$. The Segre variety
\[
 \PP(V)\times\PP(U)\subset\PP(V\otimes U)
\]
parametrises lines spanned by nonzero decomposable tensors $a\otimes z$ with $a\in V$ and $z\in U$, and has dimension $6$. The locus of $7$-planes containing a line spanned by a nonzero decomposable tensor has dimension at most
\[
 6+\dim\Gr(6,\C^{14})=54.
\]
Using the standard bases, identify $V\otimes U$ with $5\times3$ matrices and use the complex bilinear pairing $(x,y)\mapsto\sum_{i=1}^5\sum_{j=1}^3x_{ij}y_{ij}$.  Orthogonal complements in the construction of $S$ refer to this pairing, without complex conjugation.  The locus of $7$-planes whose orthogonal complement for this pairing contains a line spanned by a nonzero decomposable tensor has dimension at most
\[
 6+\dim\Gr(7,\C^{14})=55.
\]
The two loci just described are images of projective incidence varieties and are therefore closed.  Since their dimensions are strictly smaller than $56$, their union is a proper closed subset of the Grassmannian.  Hence there is a $7$-dimensional subspace $W\subset V\otimes U$ such that neither $W$ nor its orthogonal complement $W^\perp$ contains a nonzero decomposable tensor.

Identify $\PP^2$ with $\PP(U)$.  Choose a basis $w_1,\ldots,w_7$ of $W$, regarded as $5\times3$ matrices.  For $z=(z_1,z_2,z_3)\in U$, the $7\times5$ matrix $T(z)$ has entries
\[
 T(z)_{ai}=\sum_{j=1}^3(w_a)_{ij}z_j,\qquad 1\le a\le7,\quad1\le i\le5.
\]
This gives a sheaf map $\OO_{\PP^2}(-1)^5\to\OO_{\PP^2}^7$.

\begin{proposition}\label{prop:Steiner}
The cokernel $S$ in the sequence
\begin{equation}\label{eq:Steiner}
 0\longrightarrow\OO_{\PP^2}(-1)^5
 \xrightarrow{\ T\ }\OO_{\PP^2}^7
 \longrightarrow S\longrightarrow0
\end{equation}
is an ample $\mu_{\OO(1)}$-stable vector bundle of rank two with
\[
 c_1(S)=5\ell,\qquad c_2(S)=15\ell^2,
 \qquad \ell:=c_1\bigl(\OO_{\PP^2}(1)\bigr).
\]
\end{proposition}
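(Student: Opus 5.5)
The plan is to read all four properties off the resolution \eqref{eq:Steiner}, using the two genericity conditions on $W$ in the order they were introduced. The key observation is that, for $a\in V$, $z\in U$ and $y\in\C^7$, the matrix products $T(z)a$ and $y^tT(z)$ can be rewritten through the bilinear pairing on $V\otimes U$. One condition on $W$ then controls local freeness and the other controls ampleness. Stability and the Chern classes are formal consequences of the resolution.

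\emph{Step 1 (injectivity and local freeness).} The $b$-th entry of $T(z)a$ is $\sum_{i,j}(w_b)_{ij}a_iz_j$, which is the pairing of $w_b$ with $a\otimes z$. Hence $T(z)a=0$ if and only if $a\otimes z\in W^\perp$. Since $W^\perp$ contains no nonzero decomposable tensor, $T(z)$ is injective for every $z\ne0$. So $T$ is a fibrewise injective bundle map, and its cokernel $S$ is locally free of rank $7-5=2$. The Chern classes follow from the Whitney formula:
\[
 c(S)=c(\OO_{\PP^2}(-1))^{-5}=(1-\ell)^{-5}=1+5\ell+15\ell^2 .
\]
This matches the general Steiner formulas for $m=5$.

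\emph{Step 2 (ampleness).} The bundle $S$ is globally generated by the images of the seven standard sections. This gives a morphism $\varphi$ from the space of rank-one quotients of $S$ to $\PP^6$ (in the paper's convention, $\PP(S^*)$), with $\varphi^*\OO(1)=\OO_{\PP(S^*)}(1)$. A point over $[z]$ is a covector $y\in\C^7$, up to scale, with $y^tT(z)=0$, and $\varphi$ sends it to $[y]$. The fibre of $\varphi$ over $[y]$ is therefore $\{[z]:w_yz=0\}=\PP(\ker w_y)$, where $w_y=\sum_a y_aw_a$ is a $5\times3$ matrix.

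Because $W$ contains no nonzero decomposable tensor and the $w_a$ are linearly independent, $w_y$ is nonzero and not of rank one. So $\operatorname{rank}w_y\ge2$, and $\ker w_y$ has dimension at most one. Every fibre of $\varphi$ is thus at most a point, so $\varphi$ is finite. The pullback of an ample bundle under a finite morphism is ample, hence $\OO_{\PP(S^*)}(1)$ is ample, that is, $S$ is ample. Correctly matching the line/quotient convention for $\PP(S^*)$ with the description of $\varphi$ is the point needing most care, though it is not deep.

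\emph{Step 3 (stability).} Since $\rk S=2$ and $\mu(S)=5/2$, we must exclude rank-one subsheaves of slope at least $5/2$. Saturating, it suffices to rule out nonzero maps $\OO_{\PP^2}(a)\to S$ with $a\ge3$, that is, to show $H^0(S(-a))=0$ for $a\ge1$. Twist \eqref{eq:Steiner} by $\OO(-a)$ and take cohomology. We have $H^0(\OO(-a)^7)=0$ for $a\ge1$, and $H^1(\OO(-1-a)^5)=0$ because $H^1$ of every line bundle on $\PP^2$ vanishes. Therefore $H^0(S(-a))=0$, and $S$ is $\mu_{\OO(1)}$-stable. (Semistability would in fact follow for free, since $c_1$ is odd.)

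I expect no real obstacle here. The only delicate points are the two translations of the matrix conditions into statements about decomposable tensors in $W$ and $W^\perp$, and these are exactly what the choice of $W$ was designed for.
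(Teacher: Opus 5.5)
Your proposal is correct and follows the paper's proof essentially step by step. The shared steps are: local freeness because $W^\perp$ contains no decomposable tensor; ampleness via the tautological morphism $\PP(S^*)\to\PP^6$, which is quasi-finite because $W$ has no rank-one element and hence finite since it is also proper (a point you use only implicitly); the Chern classes from $(1-\ell)^{-5}$; and stability from the vanishing of $H^0(S(-d))$. Your vanishing for all $a\ge1$ is slightly stronger than the paper's range $d\ge3$, but the argument is the same.
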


\begin{proof}
If $T(z)a=0$ for $z\in U\setminus\{0\}$ and $a\in V\setminus\{0\}$, then the nonzero tensor $a\otimes z$ belongs to $W^\perp$, contrary to its construction. Thus $T(z)$ has rank five at every point, so $S$ is locally free of rank two.

The dual bundle $S^*$ is a subbundle of $\OO^7$. For $v=(v_1,\ldots,v_7)\in\C^7\setminus\{0\}$, write
\[
 M_v=\sum_{a=1}^7v_aw_a\in W.
\]
A point $([z],[v])\in\PP(S^*)$ satisfies $M_vz=0$. Since $W$ contains no nonzero rank-one tensor, $\operatorname{rank}M_v\ge2$; the fibre condition forces $\operatorname{rank}M_v=2$, so $\ker M_v=\C z$. Hence the tautological morphism
\[
 q:\PP(S^*)\longrightarrow\PP^6,\qquad([z],[v])\longmapsto[v],
\]
is quasi-finite.  Since it is projective, it is proper and hence finite.  Moreover,
\[
 q^*\OO_{\PP^6}(1)=\OO_{\PP(S^*)}(1).
\]
The restriction of $\OO_{\PP^6}(1)$ to the closed image of $q$ is ample, and its pullback by the finite surjection onto that image is ample by \cite[Propositions~4.1 and~4.3]{Hartshorne}.  Hence $S$ is ample.

For the total Chern class $c(S)=1+c_1(S)+c_2(S)$, the resolution gives
\[
 c(S)=(1-\ell)^{-5}=1+5\ell+15\ell^2.
\]
The resolution \eqref{eq:Steiner} makes $S$ a rank-two Steiner bundle on $\PP^2$. Its stability is a special case of \cite[Theorem~2.7]{BohnhorstSpindler}; see also \cite[Theorem~3.11]{DolgachevKapranov}. In the present case there is a short direct proof. For every integer $d\ge3$, twisting \eqref{eq:Steiner} by $\OO(-d)$ and using $H^0(\PP^2,\OO(-d))=H^1(\PP^2,\OO(-d-1))=0$ gives
\[
 H^0(\PP^2,S(-d))=0.
\]
If a rank-one subsheaf $G\subset S$ had slope at least $\mu_{\OO(1)}(S)=5/2$, its double dual would be a line bundle $G^{**}\simeq\OO(d)$ with $d\ge3$. Taking the double dual of the inclusion gives a nonzero map $\OO(d)\to S$, hence a nonzero section of $S(-d)$, contradicting the vanishing above. Thus $S$ is slope-stable.
\end{proof}

To obtain the comparison bundle on $X$, we pull $S$ back by a finite morphism to $\PP^2$.  Since $H$ is very ample, two general divisors in $|H|$ meet properly in a finite set.  Choose sections $s_0,s_1\in H^0(X,H)$ defining such divisors, and then choose $s_2$ nonvanishing on their intersection.  The three sections have no common zero and give a morphism
\[
 f:X\longrightarrow\PP^2,\qquad f=[s_0:s_1:s_2],
\]
with $f^*\OO_{\PP^2}(1)=H$. A positive-dimensional fibre would contain a curve on which $H$ has degree zero, contradicting ampleness; hence $f$ is quasi-finite.  Since $X$ is projective, $f$ is proper and therefore finite.  Its image is a closed irreducible surface in $\PP^2$, hence all of $\PP^2$, so $f$ is finite and surjective. Its degree is
\begin{equation}\label{eq:f-data}
 \deg f=h^2=40.
\end{equation}

The comparison bundle is
\[
 E_S:=f^*S.
\]
The projective bundle $\PP((f^*S)^*)$ is the base change of $\PP(S^*)$, and its tautological line bundle is the pullback of $\OO_{\PP(S^*)}(1)$.  Since this base change is finite and surjective, $E_S=f^*S$ is ample by \cite[Proposition~4.3]{Hartshorne}.  Moreover, \cref{prop:Steiner,eq:f-data} gives
\[
 c_1(E_S)=5h,\qquad\int_Xc_2(E_S)=15\deg f=600.
\]
Since $X$ is an abelian surface,
\[
 \chi(E_S)=\int_X\left(\frac12c_1(E_S)^2-c_2(E_S)\right)=500-600=-100,
\]
and hence
\[
 v(E_S)=(2,5h,-100)=\mathbf v=v(E_0).
\]

We now prove that $E_S=f^*S$ is $\mu_H$-stable by applying the pullback theorem of Biswas--Das--Parameswaran \cite[Theorem~1.2]{BDP} to the $\mu_{\OO_{\PP^2}(1)}$-stable bundle $S$ of \cref{prop:Steiner}.  The morphism $f$ is finite, separable, and surjective; both source and target are smooth projective varieties; and the induced map on the \emph{\'etale} fundamental groups is surjective because $\pi_1^{\mathrm{et}}(\PP^2)$ is trivial.  The polarisation on the source is precisely $f^*\OO_{\PP^2}(1)=H$.  Biswas--Das--Parameswaran's theorem \cite[Theorem~1.2]{BDP} therefore implies that $E_S$ is $\mu_H$-stable.

The bundle $E_S$ also admits a smooth Griffiths-semipositive Hermitian metric, namely the quotient metric induced by the standard flat metric on $\OO_X^7$ through the surjection $\OO_X^7\to E_S$ obtained from \eqref{eq:Steiner} \cite[Ch.~VII, Proposition~(6.10)]{DemaillyCADG}. Thus $E_S$ cannot serve as the bundle $E$ in \cref{thm:main}.

\section{Stable deformations and the ample locus}\label{sec:bridge}

In this section, we deform $E_0=L_1\oplus L_2$ to nonsplit extensions of $L_2$ by $L_1$.  We choose a nearby polarisation for which these extensions are stable and scale the extension class so that the dual holomorphic structures lie in the neighbourhood of \cref{cor:adjoint-open}.  We then place one such extension and the ample bundle $E_S$ in a common irreducible moduli space, where the density of the ample locus completes the proof of \cref{thm:main}.

We first determine the extension space and the polarisations for which the extensions are stable.  Since $\mu_H(L_1)=\mu_H(L_2)$, the subbundle $L_1$ prevents any extension of $L_2$ by $L_1$ from being $\mu_H$-stable.  We perturb the polarisation so that $L_1$ has smaller slope than $L_2$ and prove stability by estimating the slopes of all rank-one subsheaves.

For $\mathcal N=L_1\otimes L_2^{-1}$ and $\xi=c_1(\mathcal N)$, the matrices in \eqref{eq:split-matrices} and the intersection formula \eqref{eq:mixed-intersection} give
\[
 \xi\cdot h=0,\qquad \xi^2=-1400.
\]
Neither $\mathcal N$ nor $\mathcal N^{-1}$ has a nonzero section: a nonempty effective zero divisor would have positive intersection with $h$, while a nowhere-vanishing section would trivialise a line bundle with nonzero first Chern class.  Since $K_X\simeq\OO_X$, Serre duality also gives $H^2(X,\mathcal N)=H^2(X,\mathcal N^{-1})=0$.  Riemann--Roch \cite[Theorem~3.6.3]{BirkenhakeLange} therefore yields
\begin{equation}\label{eq:extension-cohomology}
 \dim_\C H^1(X,\mathcal N)=\dim_\C H^1(X,\mathcal N^{-1})=700.
\end{equation}

Let $N^1(X)_\R$ be the real vector space of divisor classes modulo numerical equivalence.  For a real ample class $a$, the slope of a torsion-free sheaf $G$ of positive rank is $\mu_a(G)=c_1(G)\cdot a/\rk G$.  Consider the open set
\begin{equation}\label{eq:extension-polarisations}
 \mathcal W=\left\{a\in N^1(X)_\R:
       a-\tfrac12h\text{ is ample},\quad 0<-\xi\cdot a<1\right\}.
\end{equation}
Every class in $\mathcal W$ is ample.  Moreover, $\mathcal W$ contains classes arbitrarily close to $h$: for sufficiently small $\varepsilon>0$, take the class $a_\varepsilon=h+\varepsilon\xi$.  Then $a_\varepsilon-h/2$ is ample and $-\xi\cdot a_\varepsilon=1400\varepsilon\in(0,1)$.

\begin{proposition}\label{prop:extension-stability}
For every $a\in\mathcal W$, every nonsplit extension
\begin{equation}\label{eq:nonsplit-extension}
 0\longrightarrow L_1\longrightarrow E\longrightarrow L_2\longrightarrow0
\end{equation}
is $\mu_a$-stable.
\end{proposition}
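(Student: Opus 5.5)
Let $E$ be a nonsplit extension as in \eqref{eq:nonsplit-extension} and fix $a\in\mathcal W$. Since $E$ has rank two, it suffices to prove $\mu_a(G)<\mu_a(E)$ for every saturated rank-one subsheaf $G\subset E$. A saturated subsheaf of a rank-two bundle on a smooth surface is a line bundle $G$ with torsion-free quotient $E/G\simeq I_Z\otimes\det E\otimes G^{-1}$, where $I_Z$ is the ideal sheaf of a zero-dimensional subscheme $Z$. We compare $G$ with the two factors. Note first that
\[
 \mu_a(E)=\tfrac12(c_1(L_1)+c_1(L_2))\cdot a,\qquad c_1(L_1)\cdot a-\mu_a(E)=\tfrac12\xi\cdot a<0,
\]
so $\mu_a(L_1)<\mu_a(E)<\mu_a(L_2)$ and the gap is $-\tfrac12\xi\cdot a\in(0,\tfrac12)$.

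\emph{Case 1: the composite $G\to E\to L_2$ is zero.} Then $G\subset L_1$, so $G=L_1(-D)$ for an effective divisor $D$. Since $a$ is ample, $\mu_a(G)\le\mu_a(L_1)<\mu_a(E)$.

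\emph{Case 2: the composite $G\to L_2$ is nonzero.} Then $G\simeq L_2(-D)\otimes$ (nothing else), that is, $G=L_2(-D)$ with $D$ effective. If $D=0$, the map $G\to L_2$ is an isomorphism, which would split the extension; this is excluded. So $D$ is a nonzero effective divisor, and $\mu_a(G)=\mu_a(L_2)-D\cdot a$. Stability then needs $D\cdot a>\mu_a(L_2)-\mu_a(E)=-\tfrac12\xi\cdot a$, which is less than $\tfrac12$. This is where the condition that $a-\tfrac12h$ is ample is used. We have $D\cdot a=D\cdot(a-\tfrac12h)+\tfrac12D\cdot h$. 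The first term is positive because $D$ is a nonzero effective divisor and $a-\tfrac12h$ is an ample real class. The second term satisfies $D\cdot h\ge1$, since $h$ is integral and ample and $D\ne0$ is effective. Hence $D\cdot a>\tfrac12>-\tfrac12\xi\cdot a$, which gives $\mu_a(G)<\mu_a(E)$.

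The main point requiring care is Case 2. Here one must use saturation to ensure that $G\to L_2$ is injective with image $L_2(-D)\otimes I_W$, and then pass to the reflexive hull. If the image is $L_2(-D)\otimes I_W$ with $W$ zero-dimensional, then $c_1$ is unchanged and the slope is computed as above. The boundary subcase $D=0$ must also be handled: there $G\to L_2$ is an injection with zero-dimensional cokernel. Since $G$ is a line bundle and $L_2$ is reflexive, such an injection is an isomorphism away from codimension two, hence an isomorphism. Its inverse then splits \eqref{eq:nonsplit-extension}, which is a contradiction. Positivity of an ample real class on nonzero effective divisors follows from Kleiman's criterion or from Nakai--Moishezon for $\R$-classes, and this justifies the inequality $D\cdot(a-\tfrac12h)>0$.
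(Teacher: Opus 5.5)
Your proof is correct and follows essentially the same route as the paper. It uses the same case split according to whether $G\to E\to L_2$ vanishes, excludes $D=0$ by nonsplitness, and uses the bound $D\cdot a>\tfrac12 D\cdot h\ge\tfrac12>-\tfrac12\xi\cdot a$ in the second case.

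The one difference is the reduction step: you reduce to saturated (hence invertible) subsheaves, whereas the paper passes to the double dual $G^{**}$, which has the same $c_1$. Your closing worry about images of the form $L_2(-D)\otimes I_W$ is unnecessary, since a nonzero map of line bundles $G\to L_2$ has image exactly $L_2(-D)$.
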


\begin{proof}
We adapt the line-subbundle argument in \cite[Example~1.2.10]{HuybrechtsLehn} to the present surface.  For a rank-one subsheaf $G\subset E$, its double dual $M=G^{**}$ is a line bundle with $c_1(M)=c_1(G)$, and the inclusion induces a nonzero map $M\to E$, by \cite[Tags~0B3N and~0AVC]{Stacks}. Write $\delta=-\xi\cdot a\in(0,1)$. If $M\to E\to L_2$ vanishes, then $M$ maps into $L_1$, so
\[
 \mu_a(G)-\mu_a(E)
 \le\mu_a(L_1)-\mu_a(E)=-\frac{\delta}{2}<0.
\]

Otherwise $M\simeq L_2(-D)$ for an effective divisor $D\ne0$, since $D=0$ would split the extension.  As $a-h/2$ is ample and $h$ is integral and ample, $D\cdot a>\tfrac12D\cdot h\ge\tfrac12$. Hence
\[
 \mu_a(G)-\mu_a(E)=\frac{\delta}{2}-D\cdot a
 <-\frac{1-\delta}{2}<0.
\]
Thus every rank-one subsheaf has smaller slope, proving stability.
\end{proof}

The following theorem gives stable bundles arbitrarily close to the split holomorphic structure.

\begin{theorem}\label{thm:stabledeform}
For every $a\in\mathcal W$, every integer $\nu\ge0$, and every $C^\nu$-neighbourhood of $\bar\partial_{E_0}$ on the fixed smooth bundle $E_0$, there exists an integrable Dolbeault operator in that neighbourhood whose holomorphic bundle $E_1$ is $\mu_a$-stable and satisfies
\[
 \det E_1\simeq H^{\otimes5},\qquad
 v(E_1)=(2,5h,-100),\qquad v(E_1)^2=1400.
\]
\end{theorem}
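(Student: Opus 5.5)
The plan is to realise $E_1$ as a nonsplit extension of $L_2$ by $L_1$ whose extension class is scaled by a small parameter; \cref{prop:extension-stability} then gives $\mu_a$-stability.

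First fix a nonzero class $\beta\in H^1(X,L_1\otimes L_2^{-1})$; it exists by \eqref{eq:extension-cohomology}. By Dolbeault's theorem, represent $\beta$ by a smooth $\bar\partial$-closed $(0,1)$-form $\theta\in\Omega^{0,1}(\Hom(L_2,L_1))$, closed for the Dolbeault operator of the line bundle $\mathcal N=L_1\otimes L_2^{-1}$. For $t\in\C$ define on the fixed smooth bundle $E_0=L_1\oplus L_2$ the operator
\[
 \bar\partial_t=\bar\partial_{E_0}+t\begin{pmatrix}0&\theta\\0&0\end{pmatrix}.
\]
It is a Dolbeault operator because the perturbation is a zeroth-order term in $\Omega^{0,1}(\End E_0)$. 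It is integrable because $\bar\partial_t^2=t\,\bar\partial_{\End}\Theta+t^2\,\Theta\wedge\Theta$ with $\Theta$ the strictly upper-triangular matrix: the first term vanishes since $\theta$ is $\bar\partial$-closed, and the second vanishes since $\Theta$ is nilpotent and the $(0,2)$-wedge of $\theta$ with itself sits in the zero corner.

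In the holomorphic structure $\bar\partial_t$, the subbundle $L_1$ is preserved and the quotient is $L_2$ with its original structure. So $E_t$ is an extension as in \eqref{eq:nonsplit-extension}, with extension class $t\beta$. This is the standard Dolbeault description of $\operatorname{Ext}^1(L_2,L_1)=H^1(X,\mathcal N)$; up to a sign convention, the class is the coboundary of the identity section of $L_2$. For $t\neq0$ the class is nonzero, so the extension is nonsplit and \cref{prop:extension-stability} shows that $E_t$ is $\mu_a$-stable for every $a\in\mathcal W$.

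The $C^\nu$ distance from $\bar\partial_{E_0}$ is $|t|\,\norm{\theta}_{C^\nu}$, measured in the fixed trivialisations. So for $t$ small and nonzero, $\bar\partial_t$ lies in any prescribed $C^\nu$-neighbourhood, and we set $E_1:=E_t$.

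The topological invariants are unchanged because the smooth bundle is unchanged. For the determinant, $\det E_1\simeq L_1\otimes L_2=H^{\otimes5}$ holomorphically, since the determinant of an extension is the tensor product of sub and quotient. Alternatively, $\bar\partial_t$ induces the original operator on $\Lambda^2$ because the perturbation is trace-free. Chern classes depend only on the smooth bundle, so $v(E_1)=v(E_0)=(2,5h,-100)$ and $v(E_1)^2=1400$ by the computation in \cref{sec:split}.

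The only delicate step is the identification of the extension class of $\bar\partial_t$ with $t\beta$, which is what makes the extension nonsplit. I would verify it directly. A holomorphic splitting would be a smooth lift $\OO\to E_t$ of the form $(\sigma,1)$ in the block decomposition, with $\sigma$ a section of $\Hom(L_2,L_1)=\mathcal N$. The lift is holomorphic exactly when $\bar\partial\sigma+t\theta=0$, which would make $t\beta=0$ in $H^1(X,\mathcal N)$, a contradiction for $t\neq0$.
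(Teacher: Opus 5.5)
Your proposal is correct and follows the paper's proof essentially verbatim. Both use the same upper-triangular family $\bar\partial_t$ built from a $\bar\partial$-closed representative of a nonzero class in $H^1(X,\mathcal N)$, with integrability from nilpotency, stability from \cref{prop:extension-stability}, and closeness because the operator difference is $t$ times a fixed form. Your explicit nonsplitting check adds detail the paper leaves implicit. The splitting lift should be a map $L_2\to E_t$ of the form $(\sigma,1)$, not $\OO\to E_t$, but your condition $\bar\partial\sigma+t\theta=0$ is right.
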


\begin{proof}
By \cref{eq:extension-cohomology}, choose a nonzero class $\theta\in H^1(X,\mathcal N)=\operatorname{Ext}^1(L_2,L_1)$ and a smooth $\bar\partial$-closed representative $\beta\in\Omega^{0,1}(\mathcal N)$.  On the fixed smooth bundle $E_0=L_1\oplus L_2$, consider the family, parametrised by $t\in\C$,
\begin{equation}\label{eq:extension-operators}
 \bar\partial_t=
 \begin{pmatrix}
  \bar\partial_{L_1}&t\beta\\
  0&\bar\partial_{L_2}
 \end{pmatrix}.
\end{equation}
The equation $\bar\partial\beta=0$ and the vanishing of the product of two strictly upper-triangular $2\times2$ matrices give $\bar\partial_t^2=0$.  The resulting holomorphic bundle $E^{(t)}$ is the extension with class $t\theta$ in \cref{eq:nonsplit-extension}; it is locally free since the two line bundles are locally free.  For $t\ne0$ it is nonsplit, and hence $\mu_a$-stable for every $a\in\mathcal W$.  Since $X$ is projective, these holomorphic bundles are algebraic by Serre's GAGA theorem \cite{SerreGAGA}.

The operator difference in \cref{eq:extension-operators} is $t\beta$, so it tends to zero in every fixed finite $C^\nu$ coefficient norm.  The same holds for the dual operators on $F_0=E_0^*$: their difference is the negative transpose of the matrix-valued form defining the difference on $E_0$.  The exact sequence gives $\det E^{(t)}=L_1\otimes L_2=H^{\otimes5}$ and preserves the Chern classes.
\end{proof}

The parameter $t$ supplies representatives close to the split operator. For $t\ne0$, the constant smooth bundle automorphism
\[
 g_t=\begin{pmatrix}t&0\\0&1\end{pmatrix},
 \qquad \bar\partial_t\circ g_t=g_t\circ\bar\partial_1,
\]
gives a holomorphic isomorphism $E^{(1)}\simeq E^{(t)}$. The matrix $g_t$ is singular at $t=0$, so these isomorphic bundles can have representatives converging to the split operator. These extensions are not ample: their quotient $L_2$ has $c_1(L_2)^2=-100$.  We now choose a common general polarisation for a stable extension near $E_0$ and the ample comparison bundle $E_S$.

\begin{lemma}\label{lem:polarisation-open}
Let $h=c_1(H)\in N^1(X)_\R$.  If a rank-two vector bundle $E$ on $X$ is $\mu_H$-stable, then it is $\mu_a$-stable for every $a$ in some neighbourhood of $h$ in the real ample cone.
\end{lemma}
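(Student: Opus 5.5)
The plan is a boundedness argument that reduces stability near $h$ to finitely many numerical conditions, each open in $a$. Stability of a rank-two bundle is tested on rank-one subsheaves $G\subset E$. As in the proof of \cref{prop:extension-stability}, replacing $G$ by its saturation and then by the double dual yields a line bundle $M$ with a nonzero map $M\to E$ and $c_1(M)\ge c_1(G)$ up to an effective class. So it suffices to control classes $\gamma=c_1(M)$ of line bundles admitting a nonzero map $M\to E$, that is, with $H^0(X,E\otimes M^{-1})\ne0$. Set
\[
 \zeta(M):=2c_1(M)-c_1(E).
\]
Then $\mu_a(M)<\mu_a(E)$ is equivalent to $\zeta(M)\cdot a<0$.

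First I will show that the set $Z$ of classes $\zeta(M)\in N^1(X)$, with $M$ ranging over such line bundles, has only finitely many members satisfying $\zeta\cdot h\ge -1$. Here I use $\mu_H$-stability, which gives $\zeta\cdot h<0$ for every member, together with integrality: since $\zeta\cdot h\in\Z$, in fact $\zeta\cdot h\le-1$. For boundedness, the saturation of $M$ gives an exact sequence
\[
 0\to M(D)\to E\to I_W\otimes \det E\otimes M(D)^{-1}\to 0
\]
with $D$ effective and $W$ zero-dimensional. Computing $c_2$ gives
\[
 c_2(E)=c_1(M(D))\cdot\bigl(c_1(E)-c_1(M(D))\bigr)+\operatorname{length}W\ge c_1(M(D))\cdot\bigl(c_1(E)-c_1(M(D))\bigr).
\]
Writing $\zeta'=\zeta(M(D))$, this becomes $\zeta'^2\ge c_1(E)^2-4c_2(E)=-\Delta$. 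Since $\zeta'\cdot h<0$ as well, Hodge index on the hyperbolic lattice $N^1(X)$ bounds the set of classes with $-\Delta\le\zeta'^2$ and $-C\le\zeta'\cdot h<0$, for any fixed $C$, to a finite set. For the destabilising question only the saturated $M(D)$ matters, since $\mu_a(M)\le\mu_a(M(D))$ for ample $a$.

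Next I write $a=h+\epsilon$ with $\epsilon$ small in a fixed norm on $N^1(X)_\R$. For a saturated class $\zeta$ with $\zeta\cdot h\le-1$, I need
\[
 \zeta\cdot a\le -1+|\zeta\cdot\epsilon|<0 .
\]
The obstacle is that $|\zeta\cdot\epsilon|$ is unbounded if $\zeta$ is large, and this is the step I expect to be hardest. I would handle it by a cone argument. The conditions $\zeta^2\ge-\Delta$ and $\zeta\cdot h<0$ show that, after normalising $\zeta/|\zeta\cdot h|$, the classes lie in the set
\[
 \{\eta:\eta\cdot h=-1,\ \eta^2\ge-\Delta/(\zeta\cdot h)^2\}.
\]
By Hodge index this set sits in a bounded region, whose boundedness constant is uniform because $-\Delta/(\zeta\cdot h)^2\ge-\Delta$. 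Hence $|\zeta\cdot\epsilon|\le K|\zeta\cdot h|\,\|\epsilon\|$ for a constant $K$. Taking $\|\epsilon\|<1/(2K)$ then gives
\[
 \zeta\cdot a\le\tfrac12\,\zeta\cdot h<0
\]
uniformly over all relevant $\zeta$. This is the standard wall-and-chamber finiteness, as in Huybrechts--Lehn's treatment of walls for $\mu$-stability on surfaces.

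Finally I intersect the resulting neighbourhood with the open ample cone. For every $a$ in this intersection, every rank-one subsheaf satisfies $\mu_a(G)\le\mu_a(M(D))<\mu_a(E)$, so $E$ is $\mu_a$-stable.
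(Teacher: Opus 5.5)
Your argument is correct, but it takes a different route from the paper's.

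The paper uses no Chern-class or Hodge-index input. It embeds $E$ into $(H^{\otimes m})^{\oplus N}$, so every rank-one subsheaf has first Chern class $mh-D$ with $D$ effective. Ampleness of $h\pm\epsilon b_j$ then gives $|D\cdot b_j|\le\epsilon^{-1}D\cdot h$, hence $D\cdot a\ge\tfrac12 D\cdot h$ for $a$ near $h$. Subsheaves with $D\cdot h$ large are therefore harmless. The remaining $D$ form a finite set of lattice classes, and each is handled by openness of a strict inequality followed by shrinking the neighbourhood.

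You instead saturate and use the rank-two identity $c_2(E)=c_1(M')\cdot(c_1(E)-c_1(M'))+\operatorname{length}W$ to get the discriminant bound $\zeta^2\ge-\Delta$. Combining this with Hodge index and the integrality $|\zeta\cdot h|\ge1$ gives the uniform ratio bound $|\zeta\cdot\epsilon|\le K|\zeta\cdot h|\,\|\epsilon\|$.

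What your route buys is an explicit neighbourhood in a single step, with no exceptional finite set; in particular your preliminary finiteness claim is never used. It is also the wall structure of Huybrechts--Lehn and Yoshioka that the paper invokes later. The price is the $c_2$ computation and the Hodge index theorem, whereas the paper's argument needs only effectivity and openness of the ample cone.

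One small point needs a justification. Your inequality $-\Delta/(\zeta\cdot h)^2\ge-\Delta$ requires $\Delta\ge0$, which holds by Bogomolov's inequality for the $\mu_H$-stable $E$. Alternatively, replace $\Delta$ by $|\Delta|$: since $(\zeta\cdot h)^2\ge1$, the bound $-\Delta/(\zeta\cdot h)^2\ge-|\Delta|$ holds unconditionally.
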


\begin{proof}
Choose $m$ such that $E^*\otimes H^{\otimes m}$ is globally generated.  With $N=\dim H^0(X,E^*\otimes H^{\otimes m})$, the dual evaluation map gives an inclusion $E\hookrightarrow(H^{\otimes m})^{\oplus N}$.  For a rank-one subsheaf $G\subset E$, taking double duals as in the proof of \cref{prop:extension-stability} gives a line bundle $L=G^{**}$ mapping nontrivially to $E$ and having the same first Chern class as the original subsheaf.  Some component $L\to H^{\otimes m}$ is nonzero.  Thus $D=mh-c_1(L)$ is an effective integral numerical class, possibly zero, and, with $B=mh-\tfrac12c_1(E)$,
\[
 \mu_a(G)-\mu_a(E)=(B-D)\cdot a.
\]
Choose a basis $b_1,\ldots,b_\rho$ of $N^1(X)_\R$ and $\epsilon>0$ such that $h\pm\epsilon b_j$ are ample.  For every effective class $D$,
\[
 |D\cdot b_j|\le\epsilon^{-1}D\cdot h,\qquad
 D\cdot a\ge\tfrac12D\cdot h
 \quad\text{if }a=h+\sum_jt_jb_j,\quad\sum_j|t_j|<\epsilon/2.
\]
Take a bounded neighbourhood $U_0$ of $h$ in this region and $M>0$ with $|B\cdot a|<M$ on $U_0$.  If $D\cdot h>2M$, then $(B-D)\cdot a<0$ throughout $U_0$.  The remaining classes satisfy $D\cdot h\le2M$; the displayed bounds and nondegeneracy of the intersection pairing place them in a bounded subset of the lattice of integral numerical classes.  Only finitely many such classes occur.  For those arising from subsheaves of $E$, stability at $h$ gives $(B-D)\cdot h<0$.  Shrinking $U_0$ preserves these finitely many inequalities and proves the lemma.
\end{proof}

Apply \cref{lem:polarisation-open} to $E_S$, and let $\mathcal V$ be a neighbourhood of $h$ on which it remains stable.  The open set $\mathcal V\cap\mathcal W$ is nonempty.  The walls for $\mathbf v$ are locally finite \cite[p.~817]{Yoshioka}, so this intersection contains a rational ample class $\widehat h$ lying on no wall.  Choose an ample line bundle $H'$ with $c_1(H')=N\widehat h$ for a positive integer $N$.  This polarisation is $\mathbf v$-general, and positive rescaling preserves slope stability because $\mu_{N\widehat h}(G)=N\mu_{\widehat h}(G)$.

For a torsion-free sheaf $\mathcal F$ of positive rank, its reduced Hilbert polynomial with respect to $H'$ is
\[
 p_{H'}(\mathcal F,m)
 =\frac{\chi(\mathcal F\otimes H'^{\otimes m})}{\rk\mathcal F\,c_1(H')^2/2}.
\]
Gieseker stability requires $p_{H'}(\mathcal G,m)<p_{H'}(\mathcal F,m)$ for $m\gg0$ and every subsheaf $\mathcal G\subset\mathcal F$ of smaller positive rank.

Fix a sufficiently small nonzero $t_0$ in \cref{eq:extension-operators} so that the dual holomorphic structure of $E_1:=E^{(t_0)}$, under the fixed smooth identification $(E^{(t_0)})^*\simeq F_0$, belongs to the neighbourhood $\mathcal U$ of \cref{cor:adjoint-open}.  Both $E_1$ and $E_S$ are $\mu_{H'}$-stable, hence $H'$-Gieseker-stable \cite[Lemma~1.2.13]{HuybrechtsLehn}.  The background data $k$, $Q_0$, and $dV_H$ in the Levi obstruction continue to use $H$; only the stability polarisation changes to $H'$.

Let
\[
 \mathcal M=\mathcal M_{H'}(\mathbf v),\qquad \mathbf v=(2,5h,-100),
\]
be the moduli space of $H'$-Gieseker-stable sheaves on $X$ with this Mukai vector.  Only the Mukai vector is fixed: the determinant may vary in $\Pic^{5h}(X)$, the connected component of the Picard scheme parametrising line bundles with first Chern class $5h$.  The vector $\mathbf v$ is primitive, has positive rank and square $1400$, and $H'$ is $\mathbf v$-general.  Thus semistability equals stability for this vector \cite[\S1.2]{Yoshioka}.  Mukai's theorem \cite[Theorem~0.1]{Mukai1984} gives smoothness, and Yoshioka's theorem \cite[Theorem~0.1]{Yoshioka} gives a deformation equivalence
\[
 \mathcal M\sim_{\mathrm{def}}\widehat X\times\Hilb_X^{700}.
\]
Here $\widehat X=\Pic^0(X)$ is the dual abelian surface, where $\Pic^0(X)$ denotes the identity component of the Picard group, and $\Hilb_X^n$ is the Hilbert scheme of length-$n$ zero-dimensional subschemes of $X$.  The Hilbert scheme $\Hilb_X^{700}$ is connected \cite[Example~4.5.10]{HuybrechtsLehn}.  Consequently $\mathcal M$ is smooth and connected, hence irreducible, since distinct irreducible components of a smooth variety are disjoint.

\begin{lemma}\label{lem:local-families}
The locally free locus
\[
 \mathcal M^\circ=\{[E]\in\mathcal M:E\text{ is locally free}\}
\]
is Zariski open.  For every $[E]\in\mathcal M^\circ$, there are a pointed variety $(S,s)$, an \'etale morphism $\phi:S\to\mathcal M^\circ$ with $\phi(s)=[E]$, and a vector bundle $\mathcal E$ on $X\times S$ whose fibre over $y\in S$ represents $\phi(y)$.  Over a sufficiently small analytic neighbourhood of $s$, the dual fibres admit smooth identifications with $E^*$, equal to the identity at $s$, under which their Dolbeault operators vary continuously in every fixed finite $C^\nu$ coefficient norm.
\end{lemma}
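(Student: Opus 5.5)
Three things are needed: openness of the locally free locus, étale-local universal families, and analytic trivialisations along which the Dolbeault operators vary continuously. They will be established in that order.

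\emph{Openness.} We use the standard construction of $\mathcal M$ as a GIT quotient $R^{s}/\!/\mathrm{PGL}(V)$ of an open subset $R^s$ of a Quot scheme, which carries a universal quotient $\mathcal F_R$ on $X\times R^s$ that is flat over $R^s$ \cite{HuybrechtsLehn}. For a coherent sheaf flat over the base, the set of points where it fails to be locally free is closed in $X\times R^s$; its projection to $R^s$ is closed because $X$ is proper. The locus of $[E]$ with $E$ locally free is therefore open and $\mathrm{PGL}$-invariant in $R^s$. Since the quotient map is a principal bundle over the stable locus, and stable equals semistable here, this locus descends to a Zariski-open subset $\mathcal M^\circ$.

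\emph{Local families.} Because $R^s\to\mathcal M$ is a $\mathrm{PGL}(V)$-torsor, it admits sections étale-locally by Luna's étale slice theorem. Stable sheaves are simple, so the stabilisers are trivial modulo scalars. Given $[E]$, we choose an étale slice $\phi:S\to\mathcal M^\circ$ through a point of $R^s$ lying over $[E]$, with $S$ smooth because $\mathcal M$ is smooth. We then pull back $\mathcal F_R$ to obtain $\mathcal E$ on $X\times S$. It is flat over $S$ with locally free fibres, hence locally free on $X\times S$, and its fibres represent $\phi(y)$.

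\emph{Analytic trivialisation.} This is the main step. We pass to the analytification and choose a contractible Stein neighbourhood $B$ of $s$, say a small ball in local coordinates on the smooth variety $S$. The smooth complex vector bundle underlying $\mathcal E|_{X\times B}$ is then smoothly isomorphic to $\mathrm{pr}_X^*(\mathcal E|_{X\times\{s\}})$, because $B$ deformation retracts to $s$. A concrete way to get this isomorphism is to fix a smooth connection on $\mathcal E$ and use parallel transport along radial paths in $B$; this identification equals the identity at $s$. Under it, the holomorphic structure of each fibre $\mathcal E_y$ becomes a Dolbeault operator $\bar\partial_{E}+\alpha_y$ on the fixed smooth bundle $E$. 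The coefficients of $\alpha_y$ are obtained from the $X$-directional $(0,1)$-part of the $\bar\partial$-operator of $\mathcal E$ transported by the trivialisation, so they are smooth jointly in $(x,y)$. Since $X$ is compact, joint smoothness gives continuity of $y\mapsto\alpha_y$ in every $C^\nu$ norm, with $\alpha_s=0$. Dualising gives the statement for $E^*$, since the dual operator differs by $-\alpha_y^t$.

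I expect the main obstacle to be the bookkeeping in this last step: we must check that the transported operator is genuinely smooth in the parameter. Joint smoothness on $X\times B$ is what reduces $C^\nu$ continuity to compactness of $X$.
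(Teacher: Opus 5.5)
Your proposal is correct and follows essentially the same route as the paper. It uses \'etale-local universal families coming from the Quot-scheme/GIT construction of Huybrechts--Lehn, proves openness of the locally free locus by flatness and properness of $X$, and obtains $C^\nu$ continuity from radial parallel transport of a smooth connection over a coordinate ball, joint smoothness in $(x,y)$, and compactness of $X$. The differences are only cosmetic: you prove openness on $R^s$ and descend along the $\mathrm{PGL}(V)$-torsor where the paper pushes forward from the \'etale charts, and you transport on $\mathcal E$ and then dualise where the paper transports on the dual family directly.
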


\begin{proof}
By the existence of \'etale local universal families \cite[Corollary~4.3.5 and \S4.D, p.~139]{HuybrechtsLehn}, $\mathcal M$ admits an \'etale cover $\{\phi_\alpha:S_\alpha\to\mathcal M\}$ with flat families $\mathcal E_\alpha$ on $X\times S_\alpha$ whose fibre over each $y\in S_\alpha$ represents the moduli point $\phi_\alpha(y)$.  Here the standard construction is restricted to the determinant component $\Pic^{5h}(X)$; together with the Hilbert polynomial, this fixes $\mathbf v$ \cite[\S4.5, p.~113]{HuybrechtsLehn}.  For each $\alpha$, the set $S_\alpha^\circ$ of parameters for which the fibre is locally free on $X$ is Zariski open, by flatness of the family and properness of $X$.  The restriction of $\mathcal E_\alpha$ to $X\times S_\alpha^\circ$ is a vector bundle. Since \'etale morphisms are open,
\[
 \mathcal M^\circ=\bigcup_\alpha\phi_\alpha(S_\alpha^\circ)
\]
is Zariski open. For $[E]\in\mathcal M^\circ$, choose a point $s\in S_\alpha^\circ$ above $[E]$ and restrict this family to obtain the required $(S,s)$, $\phi$, and $\mathcal E$.

Since $\mathcal M$ is smooth, choose an analytic coordinate ball $B\subset S^{\mathrm{an}}$ about $s$ on which $\phi$ is biholomorphic onto its image.  Choose a smooth complex-linear connection on the dual family over $X\times B$.  Parallel transport along radial parameter segments, with $x\in X$ fixed, gives identifications with the central fibre $E^*$ that are the identity at $s$.  The pulled-back Dolbeault operators have coefficients smooth in $(x,y)$.  Compactness of $X$ makes this dependence continuous in every prescribed finite $C^\nu$ norm.
\end{proof}

\begin{proof}[Proof of \cref{thm:main}]
The locally free locus $\mathcal M^\circ$ contains $[E_1]$ and $[E_S]$, so it is nonempty. As an open subset of the irreducible variety $\mathcal M$, it is irreducible. Apply \cref{lem:local-families} at $[E_1]$, and write $y=0$ for the central parameter of its local family $\mathcal E$. Let $\tau_0:F_0\to E_1^*$ be the prescribed smooth identification, and let $P_y:E_1^*\to\mathcal E_y^*$ be the radial parallel transport used in the lemma. Set
\[
 \tau_y=P_y\circ\tau_0,\qquad
 \bar\partial_{J^{(y)}}:=\tau_y^{-1}\circ\bar\partial_{\mathcal E_y^*}\circ\tau_y.
\]
Since $P_0=\Id$, the identification at $y=0$ is $\tau_0$, so $J^{(0)}$ is precisely the dual structure of $E_1$ already chosen in $\mathcal U$. By $C^2$ continuity, after shrinking the analytic parameter neighbourhood, all the pulled-back dual structures $J^{(y)}$ lie in $\mathcal U$. With the same fixed background data, \cref{cor:adjoint-open} gives
\[
 S_{J^{(y)}}\le-\frac{5\pi}{8}\Id_{F_0}.
\]
Thus the image is an analytically open neighbourhood $U_{\mathrm{an}}$ of $[E_1]$ in $\mathcal M^\circ$.  No bundle represented in this neighbourhood admits a smooth Griffiths-semipositive Hermitian metric.

To prove that the ample locus is Zariski open, we use the algebraic \'etale families $\phi:S\to\mathcal M^\circ$ from \cref{lem:local-families} at all points.  For each such family $\mathcal E$, the morphism $\PP_{X\times S}(\mathcal E^*)\to S$ is proper, and its tautological line bundle restricts to $\OO_{\PP(E_y^*)}(1)$ on the fibre over $y$.  Openness of ampleness in proper families \cite[Theorem~1.2.17]{Lazarsfeld} shows that
\[
 S^+=\{y\in S:E_y\text{ is ample}\}
\]
is Zariski open.  An \'etale morphism is open in the Zariski topology, so each $\phi(S^+)$ is open.  Their union is precisely the ample locus
\[
 \mathcal M^+=\{[E]\in\mathcal M^\circ:E\text{ is ample}\}.
\]
This locus is nonempty because it contains $[E_S]$.  As a nonempty Zariski-open subset of the irreducible variety $\mathcal M^\circ$, it is dense in the analytic topology.  Thus $U_{\mathrm{an}}\cap\mathcal M^+$ is a nonempty analytically open subset of $\mathcal M^\circ$.  A bundle represented by any point in the intersection has rank two, is ample, and admits no smooth Griffiths-semipositive Hermitian metric.
\end{proof}

\section{An Open Question}\label{sec:cotangent}

The tangent and cotangent bundles are the most natural vector bundles on a smooth complex projective variety $Y$.  For the tangent bundle, Mori \cite{Mori1979} proved that ampleness characterises projective space (see also \cite{Peternell1996}).  Recently, Du--Guo--Xie \cite[Theorem~1.4]{DuGuoXie2026} gave a new proof based on the first analytic construction of rational curves on arbitrary Fano manifolds.  With the ample tangent case settled, it is natural to ask whether an ample cotangent bundle admits a smooth Griffiths-positive Hermitian metric.  Such a metric on $\Omega_Y^1$ is dual to a Hermitian metric on $T_Y$ with Griffiths-negative Chern curvature.

Xie \cite[Theorem~1.1]{Xie2018} proved that, for $n\ge2$ and $c\ge n$, a general complete intersection of $c$ hypersurfaces of sufficiently large degrees in $\PP^{n+c}$ has ample cotangent bundle.  His proof was inspired by Brotbek's work \cite{Brotbek2016} and uses the explicit symmetric differential forms constructed there.  Shortly afterwards, Brotbek--Darondeau \cite{BrotbekDarondeau2018} gave an alternative proof.

Under the stronger dimension condition $c\ge3n-1$, Mohsen \cite[Theorem~3(a)]{Mohsen2022} constructed smooth $n$-dimensional complete intersections in $\PP^{n+c}$ of sufficiently large equal degrees whose cotangent bundles carry Griffiths-positive Hermitian metrics.  This leads to the following question.

\begin{question}\label{qu:cotangent}
Fix integers $n\ge2$ and $c\ge n$.  For all sufficiently large degrees $d_1,\ldots,d_c$, does $\Omega_Y^1$ admit a smooth Griffiths-positive Hermitian metric when $Y\subset\PP^{n+c}$ is a general smooth complete intersection of multidegree $(d_1,\ldots,d_c)$?
\end{question}

In our construction, the holomorphic bundle varies over a fixed abelian surface, whose cotangent bundle is trivial.  In \cref{qu:cotangent}, the bundle instead varies with the underlying complete intersection.  The question therefore concerns how the intrinsic geometry of these varieties governs the existence of positive cotangent metrics.

\section*{Acknowledgements}
S.-Y. Xie acknowledges partial support from the National Key R\&D Program of China under Grants No.~2023YFA1010500 and No.~2021YFA1003100, and from the National Natural Science Foundation of China under Grants No.~12288201 and No.~12471081, as well as support from the Xiaomi Young Talents Program.

\section*{AI Use Disclosure}
The research direction and initial mathematical framework were developed by the authors. Generative AI was used interactively in the subsequent mathematical exploration, including the development, testing, and refinement of parts of the construction and proof. The authors independently verified the final arguments and take full responsibility for the paper.

\end{document}